\def\NAT@spacechar{~}
\theoremstyle{plain}
\newtheorem{thm}{Theorem}
\newtheorem{lem}[thm]{Lemma}
\newtheorem{cor}[thm]{Corollary}
\newtheorem{conj}[thm]{Conjecture}
\newtheorem{claim}{Claim}
\newtheorem*{claim*}{Claim}
\crefname{claim}{Claim}{Claims}
\crefname{lem}{Lemma}{Lemmas}
\crefname{thm}{Theorem}{Theorems}
\crefname{prop}{Proposition}{Propositions}
\crefname{conj}{Conjecture}{Conjectures}
\renewcommand{\baselinestretch}{1.08}
\renewcommand{\thefootnote}{\fnsymbol{footnote}}	
\newcommand\DateFootnote{
\begingroup
\renewcommand\thefootnote{}
\footnote{\today}
\setcounter{footnote}{0}
\vspace*{-3ex}
\endgroup}
\renewcommand\section{\@startsection {section}{1}{\z@}{-3ex \@plus -1ex \@minus -.2ex}{2ex \@plus.2ex}{\normalfont\large\bfseries}}
\renewcommand\subsection{\@startsection{subsection}{2}{\z@}{-2.5ex\@plus -1ex \@minus -.2ex}{1.5ex \@plus .2ex}{\normalfont\normalsize\bfseries}}
\renewcommand\subsubsection{\@startsection{subsubsection}{3}{\z@}{-2ex\@plus -1ex \@minus -.2ex}{1ex \@plus .2ex}{\normalfont\normalsize\bfseries}}
 \renewcommand\paragraph{\@startsection{paragraph}{4}{\z@}{1.5ex \@plus.5ex \@minus.2ex}{-1em}{\normalfont\normalsize\bfseries}}
\renewcommand\subparagraph{\@startsection{subparagraph}{5}{\parindent}  {1.5ex \@plus.5ex \@minus .2ex}  {-1em} {\normalfont\normalsize\bfseries}}
\setlist[itemize]{topsep=0ex,itemsep=0ex,parsep=0.4ex}
\setlist[enumerate]{topsep=0ex,itemsep=0ex,parsep=0.4ex}
\newcommand{\defn}[1]{\textcolor{Maroon}{\emph{#1}}}
\renewcommand{\thefootnote}{\fnsymbol{footnote}}	
\renewcommand{\geq}{\geqslant}
\renewcommand{\leq}{\leqslant}
\DeclareMathOperator{\td}{td}
\DeclareMathOperator{\tcn}{tcn}
\DeclareMathOperator{\ctd}{\overline{td}}
\DeclareMathOperator{\dist}{dist}
\newcommand{\bigchi}{\raisebox{1.55pt}{\scalebox{1.25}{\ensuremath\chi}}}
\newcommand{\cchi}{\bigchi_{\star}\hspace*{-0.2ex}}
\newcommand{\dchi}{\bigchi\hspace*{-0.1ex}_{\Delta}\hspace*{-0.3ex}}
\newcommand{\cfchi}{\bigchi^f_{\star}\hspace*{-0.1ex}}
\newcommand{\dfchi}{\bigchi^f_{\Delta}\hspace*{-0.1ex}}
\newcommand{\N}{\mathbb{N}}
\newcommand{\GG}{\mathcal{G}}
\newcommand{\MM}{\mathcal{M}}
\newcommand{\la}{\langle}
\newcommand{\ra}{\rangle}
\newcommand{\eps}{\varepsilon}
\newcommand{\mc}[1]{\mathcal{#1}} 
\newcommand{\bb}[1]{\mathbb{#1}}
\newcommand{\Closure}[1]{\ensuremath{C\langle{#1}\rangle}}
\newcommand{\WeakClosure}[1]{\ensuremath{W\langle{#1}\rangle}}
\begin{document}

{\Large\bfseries\boldmath\scshape Clustered Colouring of Graph Classes\\ with Bounded Treedepth or Pathwidth}

\medskip
Sergey Norin\footnotemark[2] \quad
Alex Scott\footnotemark[3] \quad
David R. Wood\footnotemark[5] 

\DateFootnote

\footnotetext[2]{Department of Mathematics and Statistics, McGill University, Montr\'eal, Canada (\texttt{snorin@math.mcgill.ca}). Supported by NSERC grant 418520.}

\footnotetext[3]{Mathematical Institute,  University of Oxford, Oxford, U.K.\ (\texttt{scott@maths.ox.ac.uk}).}

\footnotetext[5]{School of Mathematics, Monash University, Melbourne, Australia (\texttt{david.wood@monash.edu}). \\
Supported by the Australian Research Council.}

\emph{Abstract.} The \defn{clustered chromatic number} of a class of graphs is the minimum integer $k$ such that for some integer $c$ every graph in the class is $k$-colourable with monochromatic components of size at most $c$. We determine the clustered chromatic number of any minor-closed class with bounded treedepth, and prove a best possible upper bound on the  clustered chromatic number of any minor-closed class with bounded pathwidth. As a consequence, we determine the fractional clustered chromatic number of every minor-closed class.

\bigskip
\bigskip

\hrule

\bigskip

\renewcommand{\thefootnote}{\arabic{footnote}}

\section{Introduction}

This paper studies improper vertex colourings of graphs with bounded monochromatic degree or bounded monochromatic component size. This topic has been extensively studied recently~\citep{CGJ97,Archdeacon87,OOW19,NSSW19,LO17,EKKOS15,vdHW18,KO19,DN17,MRW17,HST03,EJ14,ADOV03,BK18,CE19,Kawa08,KM07,LW1,LW2,LW3,DEMWW22}; see \citep{WoodSurvey} for a survey. 

A \defn{$k$-colouring} of a graph $G$ is a function that assigns one of $k$ colours to each vertex of $G$. In a coloured graph, a \defn{monochromatic component} is a connected component of the subgraph induced by all the vertices of one colour. 

A colouring has \defn{defect} $d$ if each monochromatic component has maximum degree at most $d$. The \defn{defective chromatic number} of a graph class $\GG$, denoted by $\dchi(\GG)$, is the minimum integer $k$ such that, for some integer $d$, every graph in  $\GG$ is $k$-colourable with defect $d$. 

A colouring has \defn{clustering} $c$ if each monochromatic component has at most $c$ vertices. The \defn{clustered chromatic number} of a graph class $\GG$, denoted by $\cchi(\GG)$, is the minimum integer $k$ such that, for some integer $c$, every graph in $\GG$ has a $k$-colouring with clustering $c$. We shall consider such colourings, where the goal is to minimise the number of colours, without optimising the clustering value. 

Every colouring of a graph with clustering $c$ has defect $c-1$. Thus $ \dchi(\GG) \leq \cchi(\GG)$ for every class $\GG$.

The following is a well-known and important example in defective and clustered graph colouring. Let $T$ be a rooted tree. The \defn{depth} of $T$ is the maximum number of vertices on a root--to--leaf path in $T$. The \defn{closure} of $T$ is obtained from $T$ by adding an edge between every ancestor and descendant in $T$. For $h,k\geq 1$, let \Closure{h,k} be the closure of the complete $k$-ary tree of depth $h$, as illustrated in \cref{StandardExample}. 

\begin{figure}[h]
{\centering\includegraphics[width=\textwidth]{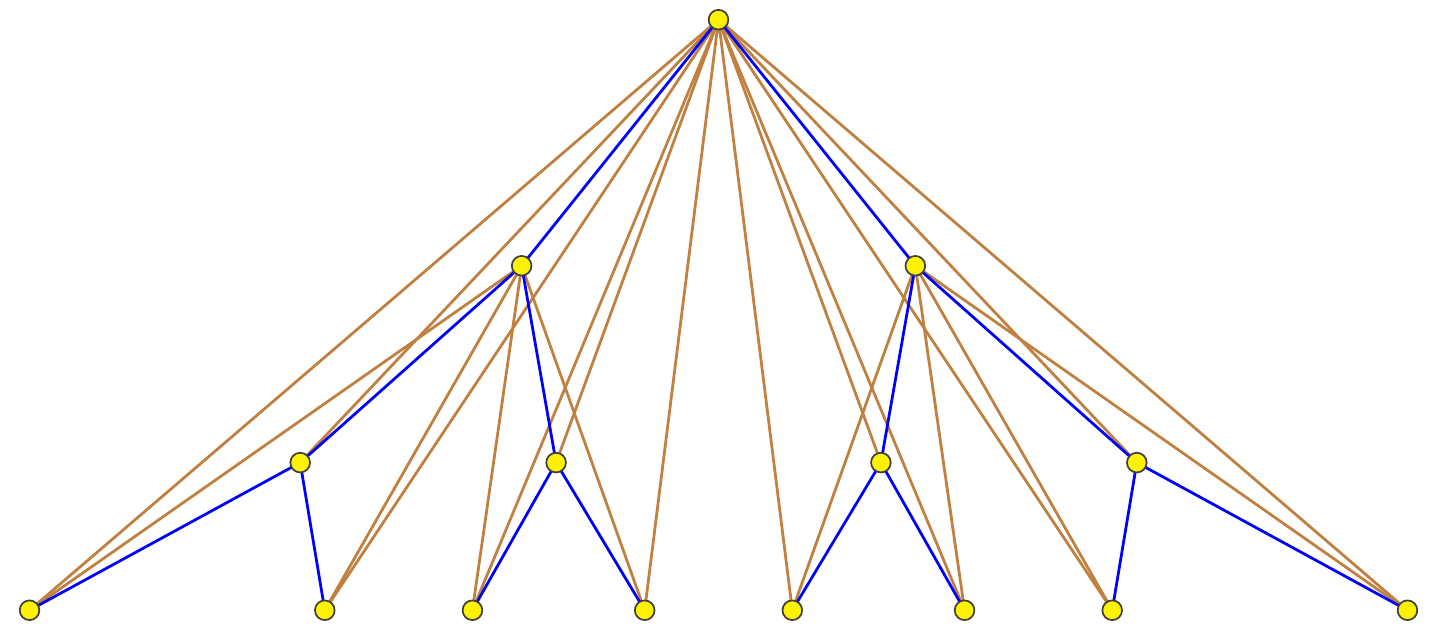}}
\caption{The standard example \Closure{4,2}.
\label{StandardExample}}
\end{figure}

It is well known and easily proved (see \citep{WoodSurvey}) that there is no $(h-1)$-colouring of \Closure{h,k} with defect $k-1$, which implies there is no $(h-1)$-colouring of \Closure{h,k} with clustering $k$. This says that if a graph class $\GG$ includes \Closure{h,k} for all $k$, then the defective chromatic number and the clustered chromatic number are at least $h$. Put another way, define the \defn{tree-closure-number} of a graph class $\GG$ to be 
$$\tcn(\GG) :=
 \min\{ h: \exists k\, \Closure{h,k} \not\in \GG \} = 
 \max\{ h: \forall k\, \Closure{h,k} \in \GG \} +1;$$ 
then 
$$ \cchi(\GG) \geq \dchi(\GG) \geq \tcn(\GG) - 1.$$

Our main result, \cref{Main} below, establishes a converse result for minor-closed classes with bounded treedepth. First we explain these terms. A graph $H$ is a \defn{minor} of a graph $G$ if a graph isomorphic to $H$ can be obtained from some subgraph of $G$ by contracting edges. A class of graphs $\MM$ is \defn{minor-closed} if for every graph $G\in\MM$ every minor of $G$ is in $\MM$, and $\MM$ is \defn{proper} minor-closed if, in addition, some graph is not in $\MM$. The \defn{connected treedepth} of a graph $H$, denoted by $\ctd(H)$, is the minimum depth of a rooted tree $T$ such that $H$ is a subgraph of the closure of $T$. This definition is a variant of the more commonly used definition of the \defn{treedepth} of $H$, denoted by $\td(H)$, which equals the maximum connected treedepth of the connected components of $H$. (See \citep{Sparsity} for background on treedepth.)\  If $H$ is connected, then $\td(H)=\ctd(H)$. In fact, $\td(H)=\ctd(H)$ unless $H$ has two connected components $H_1$ and $H_2$ with $\td(H_1)=\td(H_2)=\td(H)$, in which case $\ctd(H)=\td(H)+1$. It is convenient to work with connected treedepth to avoid this distinction. A class of graphs has \defn{bounded treedepth} if there exists a constant $c$ such that every graph in the class has treedepth at most $c$. 

\begin{thm}
\label{Main}
For every minor-closed class $\GG$ with bounded treedepth, 
$$ \dchi(\GG) = \cchi(\GG) = \tcn(\GG) -1.$$
\end{thm}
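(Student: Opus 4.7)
Since the excerpt already records $\cchi(\GG) \geq \dchi(\GG) \geq h := \tcn(\GG) - 1$, the task reduces to the matching upper bound $\cchi(\GG) \leq h$. Fix $c$ and $k$ with $\td(G) \leq c$ for every $G \in \GG$ and $\Closure{h+1, k} \notin \GG$. The case $h \geq c$ is immediate: levels of an elimination tree of $G$ form independent sets, giving a proper $c$-colouring, so $\cchi(\GG) \leq c \leq h$. Hence I may assume $h < c$.

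I would proceed by induction on $h$. For $h = 1$ we have $K_{1,k} = \Closure{2,k} \notin \GG$ as a minor, so every $G \in \GG$ has maximum degree less than $k$; a short induction on the treedepth (deleting the root of an elimination tree leaves at most $k-1$ components of strictly smaller treedepth, by the bound on $\deg(r)$) bounds $|V(H)|$ for every connected component $H$ of $G$ by some $f(c, k)$, and the trivial single-colour partition is a $1$-colouring with clustering $f(c, k)$.

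For the inductive step I would reduce to a \emph{hitting lemma}: there exist $k' = k'(c, h, k)$ and $f = f(c, h, k)$ such that every $G \in \GG$ admits a set $C \subseteq V(G)$ with (i) each component of $G[C]$ of size at most $f$, and (ii) no component of $G - C$ containing $\Closure{h, k'}$ as a minor. Granting the lemma, assign $C$ a new colour; each component $H$ of $G - C$ lies in the minor-closed class $\GG'$ consisting of all minors of all such components, which has $\td \leq c$ and $\tcn(\GG') \leq h$ by~(ii). The inductive hypothesis then supplies an $(h-1)$-colouring of $H$ with clustering $\leq f'$, and reusing these $h-1$ colours across components together with the new colour on $C$ yields an $h$-colouring of $G$ with clustering $\leq \max(f, f')$.

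The heart of the argument is the hitting lemma, which I would approach via an elimination tree $T$ of $G$: for each $v \in V(G)$, let $\alpha(v)$ be the largest $j$ such that $G$ restricted to the $T$-subtree rooted at $v$ contains $\Closure{j, K}$ as a minor, for a sufficiently large $K = K(c, h, k)$. By hypothesis $\alpha(v) \leq h$ for every $v$. The set $C$ should consist of those $v$ whose $T$-subtree contains many disjoint witnesses of depth $\alpha(v)$ hanging off distinct children, the rationale being that such a $v$ would otherwise serve as the apex of a $\Closure{h+1, k}$ minor of $G$. The main obstacle is a two-sided tightness: bounding components of $G[C]$ requires $\alpha$-chains along root-to-leaf paths of $T$ to be short, while ensuring $G - C$ avoids $\Closure{h, k'}$ requires $C$ to be comprehensive enough to sever all deep standard examples. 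I expect this to be reconciled by choosing $K$ tower-type in $c$ and $k$, so that any overly long $\alpha$-chain, or any uncovered $\Closure{h, k'}$ minor, would combine with the branching below $C$ to force $\Closure{h+1, k}$ into $G$, contradicting $G \in \GG$. Finally one must verify that the recursively generated class $\GG'$ really does inherit minor-closedness, bounded treedepth, and the strict drop in $\tcn$, so that the induction closes cleanly.
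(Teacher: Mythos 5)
Your reduction to an upper bound, the base case $h=1$, and the outer inductive scheme are fine as far as they go, but the proof has a genuine gap: the ``hitting lemma'' is the entire content of the theorem and you do not prove it --- you only sketch a plan and say you ``expect'' the tensions to be reconciled by a tower-type choice of $K$. Worse, the specific rationale offered for why the proposed set $C$ works is flawed at a concrete point. You argue that a vertex $v$ whose $T$-subtree contains many disjoint $\Closure{\alpha(v),K}$-witnesses hanging off distinct children ``would otherwise serve as the apex of a $\Closure{h+1,k}$ minor of $G$''. This does not follow: $G$ is only a \emph{subgraph} of the closure of its elimination tree, so $v$ need not be adjacent in $G$ to the branch sets of the witnesses sitting below its children, and $k$ disjoint deep witnesses below $v$ do not combine with $v$ into a deeper standard example. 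Exactly this adjacency problem is where the real work lies: the paper's proof of \cref{UpperBound} first re-roots $T$ (minimising $\sum_v \dist_T(v,r)$) so that every $G[T_v]$ is connected, records for each vertex a \emph{profile} of bounded-size ``ranked graphs'' contained in $G[T_v^+]$, groups vertices according to repeated child-profiles, and, when a group sees $h-1$ groups above it, builds the minor by iterated \emph{splicing} and by contracting whole connected subtrees $G[T_a]$ --- and even then it only obtains the weak closure $\WeakClosure{h,k}$ (which needs adjacency of leaves to ancestors only), invoking the result of Norin, Scott, Seymour and Wood that $\WeakClosure{h,k}$ contains $\Closure{h,k-1}$ as a minor to conclude. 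None of this machinery, or a substitute for it, appears in your sketch.

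A second unresolved point is the clustering bound on $G[C]$ itself: you note that root-to-leaf ``$\alpha$-chains'' must be short but give no argument, and it is not clear that vertices selected by your rule cannot form long induced paths in $G$ (this is where the paper's grouping rule --- merge a child's group only when fewer than $k$ siblings share its profile --- yields the bound $s(\ell)\leq (k-1)M_{\ell+1}s(\ell+1)$ on group sizes). So while your one-colour-class-at-a-time induction on $h$ is a reasonable alternative architecture, as written it defers all of the difficulty to an unproved lemma whose supporting heuristic fails at the crucial adjacency step; to close the gap you would need something like the profile/splice argument (or another mechanism exploiting connectivity of the subtrees and actual edges of $G$) to show that failure of your hitting set really forces a $\WeakClosure{h+1,k}$- or $\Closure{h+1,k}$-minor.
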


Our second result concerns pathwidth. A \defn{path-decomposition} of a graph $G$ consists of a sequence $(B_1,\dots,B_n)$, where each $B_i$ is a subset of $V(G)$ called a \defn{bag}, such that for every vertex $v\in V(G)$, the set $\{i\in[1,n]: v\in B_i\}$ is an interval, and for every edge $vw\in E(G)$ there is  a bag $B_i$ containing both $v$ and $w$. Here $[a,b]:=\{a,a+1,\dots,b\}$. The \defn{width} of a path decomposition $(B_1,\dots,B_n)$ is $\max\{|B_i|: i\in[1,n] \}-1$. The \defn{pathwidth} of a graph $G$ is the minimum width of a path-decomposition of $G$. Note that paths (and more generally caterpillars) have pathwidth 1. A class of graphs has \defn{bounded pathwidth} if there exists a constant $c$ such that every graph in the class has pathwidth at most $c$. 

\begin{thm}
\label{MainMain}
For every minor-closed class $\GG$ with bounded pathwidth, 
$$ \dchi(\GG) \leq \cchi(\GG) \leq 2 \tcn( \GG )-2.$$
\end{thm}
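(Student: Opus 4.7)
Since $\dchi(\GG)\le\cchi(\GG)$ is immediate from the definitions (a colouring with clustering $c$ has defect at most $c-1$), the task reduces to proving $\cchi(\GG)\le 2h-2$, where $h:=\tcn(\GG)$. I would proceed by induction on $h$. The base case $h=1$ is trivial, since $\GG$ then contains only the empty graph.

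For the inductive step, fix $k$ such that $\Closure{h,k}\notin\GG$, and let $p$ bound the pathwidth of graphs in $\GG$. Given $G\in\GG$ with a path-decomposition $(B_1,\dots,B_n)$ of width at most $p$, the plan is to produce a vertex set $X\subseteq V(G)$ such that \textbf{(a)}~$G[X]$ admits a $2$-colouring with clustering bounded by a function of $p$ and $h$, and \textbf{(b)}~every connected component of $G-X$ lies in a minor-closed subclass $\GG'\subseteq\GG$ of pathwidth $\le p$ with $\tcn(\GG')\le h-1$. Given such $X$, I would colour $G[X]$ with two fresh colours $\{2h-3,2h-2\}$ and apply the induction hypothesis to each component of $G-X$ using the colour set $\{1,\dots,2h-4\}$; since distinct components of $G-X$ share no edges, the monochromatic components in the inductive colours cannot merge across components, and the total clustering remains bounded.

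To construct $X$, I would fix a threshold $k'=k'(h,k,p)$ sufficiently large and greedily partition $[1,n]$ into maximal consecutive intervals $I_1,\dots,I_s$, with $I_j=[b_j,a_j]$ and $b_{j+1}=a_j+1$, such that each subgraph $G_j:=G\bigl[\bigcup_{i\in I_j}B_i\bigr]$ avoids $\Closure{h-1,k'}$ as a minor. Setting $X:=\bigcup_{j<s}B_{a_j}$, property \textbf{(b)} is immediate: each separator bag $B_{a_j}$ disconnects $G_j$ from $G_{j+1}$ in $G-X$, so every component of $G-X$ lies inside some $G_j\setminus X$ and belongs to the minor-closed class $\GG':=\{H:\mathrm{pw}(H)\le p,\,\Closure{h-1,k'}\not\preceq H\}$; choosing $k'$ large enough guarantees $\tcn(\GG')\le h-1$.

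The main obstacle is establishing \textbf{(a)}: finding a $2$-colouring of $G[X]$ with bounded clustering. The set $X$ is a union of separator bags $B_{a_j}$, each of size at most $p+1$, forming a ``chain of small overlapping cliques'' along the inherited path-decomposition. I would aim for a $2$-colouring defined by the parity of the smallest $j$ with $v\in B_{a_j}$, corrected by a bounded local adjustment within each $B_{a_j}$ that exploits $|B_{a_j}|\le p+1$. The central difficulty is bounding monochromatic components arising when a vertex's interval of appearance straddles several separator bags: naive alternation can allow long monochromatic chains, and one must refine the colouring (possibly by further partitioning each $B_{a_j}$) to keep monochromatic components of size $O(p)$. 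Making this precise, together with verifying that the chosen $k'$ indeed forces $\tcn(\GG')\le h-1$, is the key technical step I expect to occupy the bulk of the argument.
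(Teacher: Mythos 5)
Your overall skeleton (the easy inequality $\dchi\leq\cchi$, induction on $h=\tcn(\GG)$, a separator set $X$ that is a union of bags so that components of $G-X$ fall into a minor-closed class $\GG'$ with $\tcn(\GG')\leq h-1$) is sound as far as it goes; in particular the separation property and $\tcn(\GG')\leq h-1$ are immediate, as you say. The genuine gap is exactly the step you defer, namely (a): it is not merely a technical point to be tightened, it is false as stated. The set $X=\bigcup_{j<s}B_{a_j}$ produced by your greedy procedure is just a union of bags of size at most $p+1$, and such a union carries no structure beyond having pathwidth at most $p$; graphs of pathwidth $2$ already need not admit any $2$-colouring with bounded clustering (the standard examples $\Closure{3,m}$). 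Worse, your specific construction can be forced to produce such an $X$. Take $h=5$, let the base graph be $\Closure{3,m}$ with root $r$, middle vertices $v_i$ and leaves $\ell_{i,j}$, and for each pair $(i,j)$ add a disjoint copy of $\Closure{4,k'}$ in which one leaf $z_{i,j}$ is withheld; lay out a width-$O(1)$ path-decomposition in which, for each $(i,j)$, the bags of the gadget minus $z_{i,j}$ come first (with $r,v_i$ riding along), then a bag $\{r,v_i,\ell_{i,j},\dots\}$, then a bag introducing $z_{i,j}$. The whole graph has treedepth $4$, hence lies in a class with $\tcn=5$ and bounded pathwidth. Since a $\Closure{4,k'}$-minor appears in an interval exactly when some $z_{i,j}$ arrives, every maximal interval ends at the bag containing $\{r,v_i,\ell_{i,j}\}$, so $X$ contains $r$, all $v_i$ and (all but one of) the $\ell_{i,j}$, and $G[X]$ contains $\Closure{3,m-1}$ as a subgraph. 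Hence no $2$-colouring of $G[X]$ has clustering bounded in terms of $p$ and $h$, and your two fresh colours cannot absorb $X$. Your closing remark that one could fix this by ``further partitioning each $B_{a_j}$'' does not help, because the obstruction is global (long monochromatic interaction across many separator bags through vertices like $r$ whose intervals span the whole decomposition), not local to a bag.

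For comparison, the paper avoids ever having to cluster-colour a union of bags with two colours. It first proves that a graph of pathwidth $w$ has a $2$-colouring in which every monochromatic \emph{path} has at most $(w+3)^w$ vertices (\cref{Pathwidth2ColourPathlength}, via alternately coloured pairwise disjoint bags plus induction on pathwidth), hence each colour class has bounded \emph{treedepth} (\cref{PathwidthTreedepth}); it then applies the treedepth theorem (\cref{UpperBound}, i.e.\ \cref{Main}) to each colour class, which excludes $\WeakClosure{h,k+1}$ as a minor, to colour it with $h-1$ colours and bounded clustering, and takes the product colouring to get $2h-2$ colours. The moral is that the two colours should be spent reducing to the bounded-treedepth case (where the $(h-1)$-colour result is available), rather than on a separator set that must itself be cluster-coloured; any repair of your argument will need an ingredient of comparable strength to \cref{UpperBound} to handle the pieces, so it cannot remain as elementary as sketched.
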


\cref{Main,MainMain} are respectively proved in \cref{Treedepth,Pathwidth}. These results are best possible and partially resolve a number of conjectures from the literature, as we now explain. 


\citet{OOW19} studied the defective chromatic number of minor-closed classes. For a graph $H$, let $\MM_H$ be the class of $H$-minor-free graphs (that is, not containing $H$ as a minor). \citet{OOW19} proved the lower bound, $\dchi(\MM_H) \geq \ctd(H)-1$ and conjectured that equality holds. 

\begin{conj}[\citep{OOW19}] 
\label{tdH}
For every graph $H$, 
$$\dchi(\MM_H)=\ctd(H)-1.$$ 
\end{conj}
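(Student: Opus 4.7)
First I would note that only the upper bound $\dchi(\MM_H) \leq \ctd(H) - 1$ requires proof: the lower bound is immediate because $\Closure{h-1,k}$ has treedepth $h-1 < \ctd(H)$, and since treedepth is minor-monotone, $\Closure{h-1,k}$ cannot contain $H$ as a minor, so $\Closure{h-1,k} \in \MM_H$ for every $k$, giving $\tcn(\MM_H) \geq \ctd(H)$ (the reverse inequality comes from the definition of $\ctd$, since $H$ is a subgraph of $\Closure{\ctd(H), k_0}$ for some $k_0$). Using the monotonicity $H' \supseteq H \Rightarrow \MM_H \subseteq \MM_{H'}$, it therefore suffices to prove the upper bound when $H = \Closure{h, k_0}$, where $h := \ctd(H)$. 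I would proceed by induction on $h$; the base case $h = 1$ is vacuous since $\MM_{K_1}$ is empty.

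For the inductive step, let $G \in \MM_{\Closure{h,k_0}}$. The plan is to construct a subset $S \subseteq V(G)$, which will receive colour $h-1$, such that (i) every vertex outside $S$ has only boundedly many neighbours in $S$, and (ii) no component of $G - S$ contains $\Closure{h-1, k_1}$ as a minor, for a suitable $k_1 = k_1(k_0, h)$. By the inductive hypothesis, $G - S$ then admits an $(h-2)$-colouring with bounded monochromatic degree using colours $1, \dots, h-2$; extending by colour $h-1$ on $S$ gives the required colouring of $G$. The natural construction of $S$ is greedy: iteratively add vertices that could serve as the root of a potential large $\Closure{h-1,k_1}$-minor in the currently remaining graph, stopping when no such ``root vertex'' exists.

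The main obstacle — and the reason this conjecture remains open in full generality — is simultaneously ensuring (i) and (ii) via a Menger-type argument: one must show that if $G - S$ still contained a $\Closure{h-1, k_1}$-minor $M$, then $M$ could be combined with some vertex of $G$ adjacent to sufficiently many distinct branch-sets of $M$ to produce a $\Closure{h,k_0}$-minor of $G$, contradicting $G \in \MM_{\Closure{h,k_0}}$. Quantitatively controlling branch-set connectivity in $H$-minor-free graphs at this level of generality appears to require deep structural theorems (such as the graph-minor-structure theorem), and this is precisely what \cref{Main} of the present paper bypasses in the special case where $\MM_H$ has bounded treedepth: there the rigidity of the tree-depth decomposition effectively substitutes for the missing global Menger-type argument, so the greedy peeling procedure can be carried out by induction directly on the decomposition.
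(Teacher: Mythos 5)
The statement you are addressing is \cref{tdH}, which is an open conjecture of \citet{OOW19}; the paper explicitly notes that it remains open and proves it only in special regimes (in particular, via \cref{Main}, for minor-closed classes of bounded treedepth). Your proposal is therefore not compared against a proof in the paper, and more importantly it is not a proof: the entire content of the conjecture is concentrated in your inductive step, namely the existence of a set $S$ such that no component of $G-S$ has a $\Closure{h-1,k_1}$-minor while the colour class containing $S$ has bounded degree. You give no construction or argument for this beyond the phrase ``greedy peeling'', and you yourself concede that carrying it out ``appears to require deep structural theorems''. Naming the obstacle is not the same as overcoming it, so the proposal has a genuine gap exactly where the conjecture is hard. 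Two further points are worth flagging. First, your condition (i) is the wrong requirement for a defective colouring: since $S$ is a colour class, what must be bounded is the maximum degree of $G[S]$ itself, not the number of neighbours that vertices \emph{outside} $S$ have in $S$; as written, the extension step ``colour $S$ with colour $h-1$'' does not yield bounded defect. Second, your lower-bound sketch is too quick when $H$ is disconnected: if $H$ has two components of treedepth $\ctd(H)-1$, then $\td(H)=\ctd(H)-1$, so minor-monotonicity of treedepth alone does not exclude $H$ from $\Closure{\ctd(H)-1,k}$; one needs the extra observation that two disjoint connected minors of treedepth $\ctd(H)-1$ cannot both avoid sharing the root structure of $\Closure{\ctd(H)-1,k}$ (this case analysis is done in \citep{OOW19}). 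The reduction to $H=\Closure{h,k_0}$ and the base case are fine, but they are the easy part.
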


\cref{tdH} is known to hold in some special cases. \citet{EKKOS15} proved it if $H=K_t$; that is, $\dchi(\MM_{K_t})=t-1$, which can be thought of as a defective version of Hadwiger's Conjecture; see \citep{vdHW18} for an improved bound on the defect in this case. \citet{OOW19} proved \cref{tdH} if $\ctd(H)\leq 3$ or if $H$ is a complete bipartite graph. In particular, $\dchi(\MM_{K_{s,t}})=\min\{s,t\}$. 

\citet{NSSW19} studied the clustered chromatic number of minor-closed classes. They showed that for each $k\geq 2$, there is a graph $H$ with treedepth $k$ and connected treedepth $k$ such that $\cchi(\MM_{H}) \geq 2k-2$. Their proof in fact constructs a set $\mathcal{X}$ of graphs in $\MM_H$ with bounded pathwidth (at most $2k-3$ to be precise) such that $\cchi(\mathcal{X}) \geq 2k-2$. Thus the upper bound on $\cchi(\GG)$ in \cref{MainMain} is best possible. 

\citet{NSSW19} conjectured the following converse upper bound (analogous to \cref{tdH}):

\begin{conj}[\citep{NSSW19}]
	\label{tdConjecture}
	For every graph $H$,  $$\cchi(\MM_H)\leq 2\ctd(H)-2.$$ 
\end{conj}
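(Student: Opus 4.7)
The plan is to prove this conjecture by induction on $h := \ctd(H)$, using Theorem~\ref{MainMain} as a black box once a structural reduction of $H$-minor-free graphs is in hand. For the base case $\ctd(H) \leq 2$, one may assume $H$ is a star (possibly with isolated vertices), so $\MM_H$ consists of bounded-degree graphs with additional $K_{1,t}$-minor constraints; establishing $\cchi(\MM_H) \leq 2$ is itself a nontrivial sub-problem, but such a $2$-colouring should follow by exploiting the refined structure (beyond mere bounded degree) of $K_{1,t}$-minor-free graphs.

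For the inductive step at $\ctd(H) = h \geq 3$, fix a rooted tree $T$ of depth $h$ with $H \subseteq \Closure{T}$, let $r$ be its root, and set $H' := \Closure{T - r}$, so that $\ctd(H') = h - 1$. Given $G \in \MM_H$, I would seek a set $Z \subseteq V(G)$ satisfying (i) $G[Z]$ admits a $2$-colouring with bounded clustering, and (ii) every connected component of $G - Z$ lies in $\MM_{H'}$. Given such $Z$, the induction hypothesis yields a $(2h - 4)$-colouring of $G - Z$ with bounded clustering, and two fresh colours on $Z$ complete a $(2h - 2)$-colouring of $G$ as required.

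The natural place to construct $Z$ is via the Robertson--Seymour graph minor structure theorem: decompose $G$ as a clique-sum tree of almost-embeddable pieces, and let $Z$ consist of the apex vertices of each piece together with a shallow neighbourhood of each vortex. The vortex interiors have bounded pathwidth, so Theorem~\ref{MainMain} applied locally to each piece of $G[Z]$ (with target of connected treedepth $2$) furnishes the required local $2$-colouring. Property (ii) should hold because any $H'$-minor surviving in a component of $G - Z$ could be extended through a carefully chosen apex playing the role of $r$, producing an $H$-minor of $G$ and contradicting $G \in \MM_H$.

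The main obstacle is twofold. First, monochromatic components within $Z$ must be controlled \emph{globally} across the tree of clique-sums: colour classes may concatenate across many torsos, so the two colours on $Z$ must be chosen with global coherence, perhaps via a parity or depth argument along the clique-sum tree. Second, the $H'$-minor extension step is delicate when $H - r$ is disconnected, in which case a single apex may fail to meet every subtree below $r$ simultaneously; one likely needs $T$ in a ``tight'' form and a careful argument combining several vortex connections into one $H$-minor. Tackling planar or apex $H$ first, where the structure theorem collapses, is a natural warm-up before attacking general $H$.
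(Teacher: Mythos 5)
You have attempted to prove a statement that the paper does not prove: \cref{tdConjecture} is a conjecture of \citet{NSSW19} which this paper explicitly leaves open, establishing only the bounded-treedepth and bounded-pathwidth cases (\cref{Main,MainMain}) and, in general, the much weaker bound $\cchi(\MM_H)\leq 2^{\ctd(H)+1}-4$ of \cref{Previous}. So there is no proof in the paper to compare against, and your text is not a proof either: it is a programme whose decisive steps are missing, several of which fail as stated. Most concretely, the arithmetic of your induction is wrong whenever $T-r$ has two or more branches of depth $h-1$ (which is the typical case, e.g.\ for complete trees): then $H':=\Closure{T-r}$ has two components of treedepth $h-1$, so by the paper's own remark $\ctd(H')=\td(H')+1=h$, not $h-1$, and the induction hypothesis only yields a $(2h-2)$-colouring of $G-Z$, giving $2h$ colours overall rather than $2h-2$. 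Incidentally, your base case is mis-framed: for $\ctd(H)=2$ the graph $H$ is a subgraph of a star, and $K_{1,t}$-minor-free graphs are exactly graphs whose components have at most $t$ vertices, so a single colour with clustering $t$ suffices; the difficulty of the conjecture is not there.

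The structural step is also unsupported. In the Robertson--Seymour structure theorem the apex vertices of a torso need not be adjacent to all (or even most) of that torso, so an $H'$-minor found in a component of $G-Z$ cannot in general be routed through a deleted apex to produce an $H$-minor; deleting apex sets and vortex neighbourhoods simply does not leave $H'$-minor-free pieces, and this is precisely why no known argument achieves the conjectured bound. Moreover, the union $Z$ of apex sets over all torsos of the clique-sum tree carries no usable structure: it need not have bounded pathwidth (the vortices do, the apex sets do not), so \cref{MainMain} cannot be ``applied locally'' to $G[Z]$, and producing a $2$-colouring of $G[Z]$ with bounded clustering is essentially an instance of the problem you are trying to solve. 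The two ``obstacles'' you name at the end --- global coherence of monochromatic components across clique-sums, and disconnected $H-r$ --- are not loose ends to be tidied; together with the points above they are the substance of the open problem, and nothing in the proposal indicates how to overcome them.
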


While \cref{tdH,tdConjecture} remain open, \citet{NSSW19} showed in the following theorem that $\dchi(\MM_H)$  and $\cchi(\MM_H)$ are controlled by the treedepth of $H$:

\begin{thm}[\citep{NSSW19}]
\label{Previous}
For every graph $H$,  $\cchi(\MM_H)$ is tied to the (connected) treedepth of $H$. In particular, 
$$\ctd(H)-1 \leq \cchi(\MM_H) \leq 2^{\ctd(H)+1}-4.$$
\end{thm}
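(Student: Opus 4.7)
The lower bound is the standard example: for any $k\ge 1$, the graph $\Closure{h-1,k}$ with $h := \ctd(H)$ is the closure of a depth-$(h-1)$ tree, so has connected treedepth at most $h-1$. Since connected treedepth is minor-monotone (immediate from the closure characterisation of treedepth), $\Closure{h-1,k}$ does not contain $H$ as a minor. Hence $\Closure{h-1,k}\in\MM_H$ for every $k$, so $\tcn(\MM_H)\ge h$, and the inequality $\cchi(\GG)\ge\tcn(\GG)-1$ noted in the introduction delivers $\cchi(\MM_H)\ge \ctd(H)-1$.

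For the upper bound I would induct on $h := \ctd(H)$ and prove $\cchi(\MM_H)\le f(h):=2^{h+1}-4$, which satisfies the recurrence $f(h)=2f(h-1)+4$. The base case $h=1$ is trivial: $H=K_1$ forces $\MM_H$ to contain only the null graph and $f(1)=0$. For the inductive step, fix a witness rooted tree $T$ of depth $h$ with root $r$ such that $H\subseteq\Closure{T}$, and let $H_1,\dots,H_s$ be the connected components of $H-r$. Each $H_j$ is connected of treedepth at most $h-1$, so by induction $\cchi(\MM_{H_j})\le f(h-1)$.

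Given a connected graph $G\in\MM_H$, pick any vertex $v$ and perform BFS from $v$, producing layers $L_0=\{v\},L_1,L_2,\dots$. The crucial structural claim is that each induced subgraph $G[L_i]$ is ``almost'' $H_j$-minor-free for each $j$: any minor of $H_j$ living inside $L_i$, combined with shortest-path routes from its branch sets back to $v$, would assemble into a minor of $H$ in $G$, since in $\Closure{T}$ the root $r$ is joined to every descendant and the $v$-paths collectively play the role of $r$. By the inductive hypothesis, each $G[L_i]$ then admits a colouring using at most $f(h-1)+2$ colours with bounded clustering (the two extra colours absorb the bounded ``apex'' set that the structural claim actually produces). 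Finally, use two disjoint palettes of $f(h-1)+2$ colours, one for layers of even index and one for layers of odd index. Since edges only run within a single layer or between consecutive layers, no monochromatic component can cross a layer boundary, and cluster size stays bounded. The total is $2(f(h-1)+2)=f(h)$ colours, completing the induction.

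The step that most needs care is formalising the layer-wise structural claim, since it is not literally true that each $G[L_i]$ is $H_j$-minor-free: a small $H_j$-minor inside $L_i$ need not extend to an $H$-minor of $G$ unless the right connectivity back to $v$ is present, and the branch sets attached to $v$ via BFS paths may share prefixes. Making this precise --- by strengthening the inductive hypothesis to allow a controlled ``apex'' set alongside the excluded minor, and showing that a bounded apex in each layer suffices --- is the heart of the argument; the colouring assembly across layers is then routine bookkeeping.
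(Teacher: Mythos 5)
First, note that the paper does not prove this theorem: it is quoted from \citet{NSSW19}, so your proposal has to be measured against that paper's argument rather than anything in the present text. Your lower bound is correct and is the standard one (as in \citet{OOW19,NSSW19}): connected treedepth is minor-monotone, so with $h=\ctd(H)$ we have $\Closure{h-1,k}\in\MM_H$ for every $k$, hence $\tcn(\MM_H)\geq h$ and $\cchi(\MM_H)\geq h-1$ by the standard-example bound stated in the introduction.

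The upper bound, however, has a genuine gap exactly at the point you yourself flag as ``the heart of the argument''. Two things go wrong. First, the per-layer claim is aimed at the wrong statement: the clean and easy fact is that each BFS layer $G[L_i]$ excludes the \emph{whole} graph $H-r$ (possibly disconnected) as a minor, because $G[L_0\cup\cdots\cup L_{i-1}]$ is connected and every vertex of $L_i$ has a neighbour in $L_{i-1}$; contracting the earlier layers to one vertex makes it adjacent to all of $L_i$, and together with an $(H-r)$-model inside $L_i$ this gives an $H$-model, since $r$ is joined in $H$ only to vertices of $H-r$. No apex set and no bookkeeping of BFS paths is needed for this step. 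Second, and fatally, the induction does not close: your inductive hypothesis concerns $\MM_{H_j}$ for a single connected component $H_j$ of $H-r$, but a layer that excludes the disjoint union $H-r$ may still contain many models of every $H_j$ --- it merely cannot contain pairwise disjoint models realising all components simultaneously --- so $G[L_i]$ need not lie in any $\MM_{H_j}$, even after deleting a bounded apex set. The proposed apex repair cannot work in general, because the Erd\H{o}s--P\'osa property for $H_j$-minor models fails when $H_j$ is non-planar: ``no two disjoint $H_j$-models'' does not yield a bounded-size hitting set. Nor can you instead apply the statement being proved to $H-r$, since $\ctd(H-r)$ can equal $\ctd(H)$ (for example, $H=P_5$ rooted at its centre has $H-r=2K_2$ with $\ctd(2K_2)=3=\ctd(P_5)$). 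Handling excluded minors that are disjoint unions is precisely the substance of the upper-bound proof in \citet{NSSW19}; the layering-plus-parity doubling and the recursion $f(h)=2f(h-1)+4$ are the routine part, and the sketch does not supply the missing ingredient.
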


\cref{Main} gives a much more precise bound than \cref{Previous} under the extra assumption of bounded treedepth. 

Our third main result concerns fractional colourings. For real $t \geq 1$, a graph $G$ is \defn{fractionally $t$-colourable with clustering $c$} if there exist $Y_1,Y_2, \ldots, Y_s \subseteq V(G)$ and $\alpha_1,\ldots,\alpha_s \in [0,1]$ such that\footnote{If $c=1$, then this corresponds to a (proper) fractional $t$-colouring, and if the $\alpha_i$ are integral, then this yields a $t$-colouring with clustering $c$.}:
\begin{itemize}
	\item Every component of $G[Y_i]$ has at most $c$ vertices,
	\item $\sum_{i=1}^s \alpha_i \leq t$,
	\item $\sum_{i : v \in Y_i}\alpha_i \geq 1$ for every $v \in V(G)$.
\end{itemize} 

The \defn{fractional clustered chromatic number $\cfchi(\GG)$} of a graph class $\GG$ is the infimum of $t>0$ such that there exists $c=c(t,\GG)$ such that every $G \in \GG$ is fractionally $t$-colourable with clustering $c$. 

\defn{Fractionally $t$-colourable with defect $d$} and \defn{fractional defective chromatic number} $\dfchi(\GG)$ are defined in exactly the same way, except the condition on the component size of $G[Y_i]$ is replaced by ``the maximum degree of $G[Y_i]$ is at most $d$''. 

The following theorem determines the fractional clustered chromatic number and fractional defective chromatic number of any proper minor-closed class.

\begin{thm}
\label{FractionalChromaticNumber}
For every proper minor-closed class $\GG$, 
$$ \dfchi(\GG)= \cfchi(\GG) = \tcn(\GG) -1.$$ 
\end{thm}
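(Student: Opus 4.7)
Since a clustering-$c$ colouring has defect $c-1$, the inequality $\dfchi(\GG) \leq \cfchi(\GG)$ is immediate. It therefore suffices to establish the two bounds $\dfchi(\GG) \geq \tcn(\GG) - 1$ and $\cfchi(\GG) \leq \tcn(\GG) - 1$.

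\emph{Lower bound.} Set $h = \tcn(\GG)$. Since $\Closure{h-1,k} \in \GG$ for every $k$, it suffices to establish by induction on $h$ the following fractional strengthening of the standard obstruction: for every $d \geq 1$ and every real $t < h$, some $\Closure{h,k}$ admits no fractional $t$-colouring with defect $d$. In the inductive step the root $r$ of $\Closure{h+1,k}$ is adjacent (in the closure) to every other vertex, forcing $|Y_i| \leq d+1$ whenever $r \in Y_i$. Hence $\sum_{i : r \in Y_i} (|Y_i| - 1)\alpha_i \leq d \sum_{i : r \in Y_i} \alpha_i \leq dt$, so averaging across the $k$ subtrees rooted at children of $r$ some subtree $T_{j^\star} \cong \Closure{h,k}$ inherits ``root-shared'' weight at most $dt/k$. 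Restricting to sets $Y_i$ with $r \notin Y_i$ and patching undercovered vertices by singleton sets yields a fractional colouring of $T_{j^\star}$ with defect $d$ and weight at most $t - 1 + dt/k$. Fixing $\eps \in (0, h - t + 1)$ and picking $k$ so large that $dt/k \leq \eps$, we obtain a fractional $(t - 1 + \eps)$-colouring of $\Closure{h,k}$ with defect $d$. Since $t - 1 + \eps < h$, the inductive hypothesis supplies some $k^\star$ for which $\Closure{h,k^\star}$ admits no such colouring; taking $k \geq k^\star$ at the outset and restricting to $\Closure{h,k^\star} \subseteq \Closure{h,k}$ yields the contradiction.

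\emph{Upper bound.} For the substantive direction, I plan to exploit the layered structure of proper minor-closed classes. By a product-structure-type theorem (e.g.\ the Product Structure Theorem for minor-closed classes), every $G \in \GG$ admits a BFS-type layering $L_0, L_1, \ldots$ whose union of any $h-1$ consecutive layers induces a subgraph lying in a minor-closed subclass $\GG'$ of $\GG$ of bounded width with $\tcn(\GG') \leq h$. Applying \cref{Main} or \cref{MainMain} equips each such window with a clustered $(h-1)$-colouring of bounded clustering. A fractional clustered $(h-1)$-colouring of $G$ is then assembled by averaging the window-colourings over cyclic shifts of the layer index, skipping one layer residue class per shift so that no monochromatic component can span two adjacent windows.

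\emph{Main obstacle.} The delicate point is obtaining the sharp constant $h-1$ rather than $h$. A naive averaging over all $h$ shifts yields only $\cfchi(\GG) \leq h$, because each vertex is covered by just $h-1$ of the $h$ shifts. Closing this off-by-one gap will require either a sharper structural lemma---exhibiting windows with $\tcn \leq h-1$, so that \cref{Main} provides an $(h-2)$-colouring per window---or a non-uniform weighting scheme that fractionally absorbs the skipped layers into adjacent windows at negligible extra weight. I anticipate this calibration to be the most technical step of the proof.
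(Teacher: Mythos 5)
Your lower bound is essentially sound. The paper's \cref{l:lower} proves by induction the quantitative statement that a fractional $t$-colouring of $\Closure{h,k}$ with defect $d$ forces $t\geq h-(h-1)d/k$, whereas you run a soft induction (delete the root, average over the $k$ subtrees, restrict the sets avoiding $r$ and patch the small deficiency with singletons); both exploit the same facts, namely that the root is adjacent to everything, so a set containing $r$ has at most $d+1$ vertices and meets few subtrees, and that restriction to an induced subgraph preserves defect. Your accounting is slightly loose (the patching weight is naturally bounded by $\sum_{i\colon r\in Y_i}|Y_i\cap V(T_{j^\star})|\alpha_i$, so you should average over that quantity, or accept $d^2t/k$ instead of $dt/k$), but since $k$ is free this is harmless, and the conclusion $\dfchi(\GG)\geq\tcn(\GG)-1$ follows as you say.

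The upper bound, however, has a genuine gap, and it is not only the ``off-by-one calibration'' you flag. First, the structural statement you posit is unavailable: for a general proper minor-closed class $\GG$ there is no BFS-type layering in which a bounded number of consecutive layers induces a subgraph of bounded width --- bounded layered treewidth (and the product structure machinery) holds for apex-minor-free classes, not for arbitrary proper minor-closed ones, and even there the windows have bounded \emph{treewidth}, not bounded treedepth or pathwidth, so neither \cref{Main} nor \cref{MainMain} applies to them; the paper explicitly notes (via the Esperet--Joret example) that \cref{Pathwidth2ColourPathlength} does not extend to bounded treewidth. Second, even granting such windows, your own analysis shows the shift-averaging yields only $\cfchi(\GG)\leq h$, and you leave the reduction to $h-1$ as an open ``calibration''; that reduction is precisely the hard content of the theorem. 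The paper closes both gaps with one tool you are missing: the Dvo\v{r}\'ak--Sereni fractional treedepth-fragility theorem (\cref{t:DS}), which for every $\delta>0$ supplies sets $X_1,\dots,X_s$ with $\td(G[X_i])\leq d(\delta)$ covering each vertex at least $(1-\delta)s$ times. Applying \cref{UpperBound} to each $G[X_i]$ (which excludes $\WeakClosure{h,k+1}$ as a minor because $\Closure{h,k}\notin\GG$) gives an $(h-1)$-colouring of each $X_i$ with bounded clustering, and weighting the resulting colour classes by $\frac{1}{(1-\delta)s}$ gives a fractional $(h-1+\eps)$-colouring; the sharp constant comes from being able to take $\delta$ arbitrarily small, which your fixed layering scheme cannot do. Without \cref{t:DS} (or a proof of an equivalent fragility statement), your plan does not yield $\cfchi(\GG)\leq\tcn(\GG)-1$.
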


This result is proved in \cref{FractionalColouring}. 

We now give an interesting example of \cref{FractionalChromaticNumber}.

\begin{cor}
	\label{Surface}
For every surface $\Sigma$, if $\GG_\Sigma$ is the class of graphs embeddable in $\Sigma$, then 
$$ \dfchi(\GG_\Sigma)= \cfchi(\GG_\Sigma) = 3.$$ 
\end{cor}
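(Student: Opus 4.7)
The plan is to apply \cref{FractionalChromaticNumber} to $\GG_\Sigma$, so two things need checking: that $\GG_\Sigma$ is proper minor-closed, and that $\tcn(\GG_\Sigma) = 4$. The first is routine: edge deletion and edge contraction both preserve embeddability in any fixed surface, and by Euler's formula sufficiently large complete graphs fail to embed in $\Sigma$, so $\GG_\Sigma$ is a proper minor-closed class.

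For the lower bound $\tcn(\GG_\Sigma) \geq 4$, I would exhibit a planar drawing of $\Closure{3,k}$ for every $k$. Place the root $r$ at the north pole of a sphere, its $k$ children $v_1,\dots,v_k$ equally spaced on the equator, and the $k$ grandchildren of each $v_i$ on the equatorial arc between $v_i$ and $v_{i+1}$; edges from $r$ run along meridians, while edges from $v_i$ to its grandchildren stay inside the corresponding spherical wedge and do not cross. Since planar graphs embed in every surface, $\Closure{3,k} \in \GG_\Sigma$ for all $k$.

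For the matching upper bound $\tcn(\GG_\Sigma) \leq 4$, I would show that $\Closure{4,k}$ is not embeddable in $\Sigma$ once $k$ is large enough. The key observation is that in $\Closure{4,k}$, for any descending path $r-v-w$ of length two, the closure makes each of $r$, $v$, $w$ adjacent to every one of the $k$ children of $w$; hence $\Closure{4,k}$ contains $K_{3,k}$ as a subgraph. By Ringel's formula the Euler genus of $K_{3,k}$ is $\lceil (k-2)/2 \rceil$, which exceeds the Euler genus of $\Sigma$ for sufficiently large $k$. Thus $\Closure{4,k} \notin \GG_\Sigma$ for some $k$, giving $\tcn(\GG_\Sigma) \leq 4$.

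Combining these bounds yields $\tcn(\GG_\Sigma) = 4$, and \cref{FractionalChromaticNumber} then gives $\dfchi(\GG_\Sigma) = \cfchi(\GG_\Sigma) = 3$. The only mildly nontrivial ingredient is locating an unbounded-genus subgraph of $\Closure{4,k}$; once the copy of $K_{3,k}$ is spotted, the rest is an immediate appeal to Ringel's formula and to the theorem already proved.
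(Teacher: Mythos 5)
Your proposal is correct and follows essentially the same route as the paper: planarity of $\Closure{3,k}$ gives $\tcn(\GG_\Sigma)\geq 4$, the copy of $K_{3,k}$ inside $\Closure{4,k}$ together with the Euler-genus bound gives $\tcn(\GG_\Sigma)\leq 4$, and then \cref{FractionalChromaticNumber} applies (the paper uses the same $K_{3,2g+3}\subseteq\Closure{4,2g+3}$ observation). Your extra check that $\GG_\Sigma$ is proper minor-closed is a hypothesis the paper leaves implicit, so nothing is missing.
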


\begin{proof}
Note that $\Closure{3,k}$ is planar for all $k$. Thus $\tcn(\GG_\Sigma)\geq 4$. 
Say $\Sigma$ has Euler genus $g$. It follows from Euler's formula that $K_{3,2g+3}\not\in \GG_\Sigma$. Since $K_{3,2g+3} \subseteq \Closure{4,2g+3}$, we have $\Closure{4,2g+3} \not\in \GG_\Sigma$. Thus $\tcn(\GG_\Sigma)= 4$. The result  follows from \cref{FractionalChromaticNumber}. 
\end{proof}

In contrast to \cref{Surface}, \citet{DN17} proved that $\cchi(\GG_\Sigma)=4$. 
Note that \citet{Archdeacon87} proved that $\dchi(\GG_\Sigma)=3$; see \citep{CGJ97} for an improved bound on the defect. 

\section{Treedepth}
\label{Treedepth}

Say $G$ is a subgraph of the closure of some rooted tree $T$. For  each vertex $v\in V(T)$, let $T_v$ be the maximal subtree of $T$ rooted at $v$ (consisting of $v$ and all its descendants), and let $G[T_v]$ be the subgraph of $G$ induced by $V(T_v)$.

The \defn{weak closure} of a rooted tree $T$ is the graph $G$ with vertex set $V(T)$, where two vertices $v,w\in V(T)$ are adjacent in $G$ whenever $v$ is a leaf of $T$ and $w$ is an ancestor of $v$ in $T$. As illustrated in \cref{WeakClosure}, let \WeakClosure{h,k} be the weak closure of the complete $k$-ary tree of height $h$. 

\begin{figure}[h]
	{\centering\includegraphics[width=\textwidth]{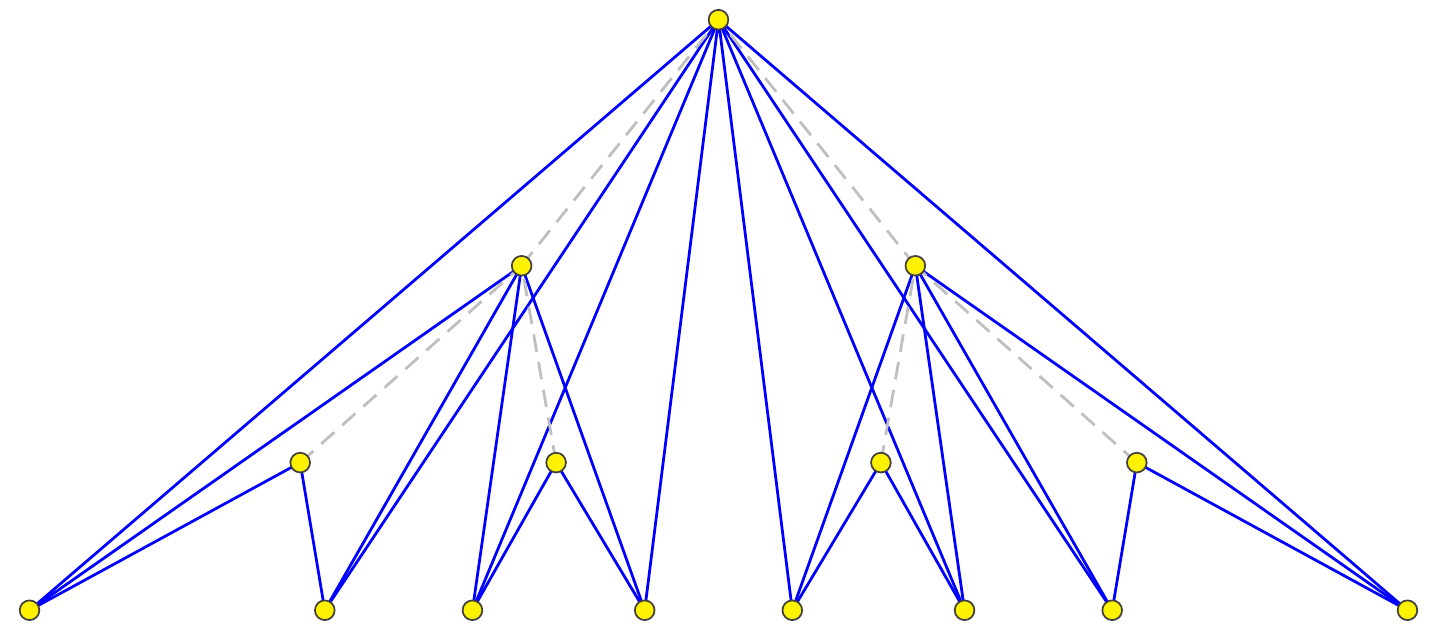}}
	\caption{The weak closure \WeakClosure{4,2}.
		\label{WeakClosure}}
\end{figure}

Note that \WeakClosure{h,k} is a proper subgraph of \Closure{h,k} for $h\geq 3$. On the other hand, \citet{NSSW19} showed that \WeakClosure{h,k} contains \Closure{h,k-1} as a minor
for all $h,k\geq 2$. Therefore \cref{Main} is an immediate consequence of the following lemma.

\begin{lem}
\label{UpperBound}
For all $d,k,h\in\N$ there exists $c=c(d,k,h)\in \N $ such that for every graph $G$ with treedepth at most $d$, either $G$ contains a \WeakClosure{h,k}-minor or $G$ is $(h-1)$-colourable with clustering $c$. 
\end{lem}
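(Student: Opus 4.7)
The plan is to induct on $h$, with $d$ and $k$ as auxiliary parameters. The base case $h = 1$ is immediate: $\WeakClosure{1, k}$ is a single vertex, so either $V(G) = \emptyset$ (and the empty $0$-colouring has clustering $0$) or $G$ trivially contains the desired minor. For the inductive step with $h \geq 2$, assume the lemma for $h-1$ at all choices of parameters. Without loss of generality, $G$ is connected; fix an elimination tree $T$ of $G$ of depth at most $d$, rooted at some vertex $r$. The overall strategy is to extract a set $S \subseteq V(G)$ that receives a dedicated colour (the $(h-1)$-th colour class), such that (a) $G[S]$ has components of bounded size and (b) $G - S$ contains no $\WeakClosure{h-1, k'}$-minor for a threshold $k' = k'(d, k, h)$ to be chosen sufficiently large. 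Given this, applying the inductive hypothesis to $G - S$ (which still has treedepth $\leq d$) yields an $(h-2)$-colouring with bounded clustering; combining it with the fresh colour on $S$ gives the desired $(h-1)$-colouring of $G$.

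To construct $S$, I would walk top-down along $T$. Call a vertex $v$ \emph{rich} if $G[T_v]$ contains a $\WeakClosure{h-1, k'}$-minor whose leaf branch sets all meet $N_G(v)$. Starting from $r$ and moving downward, whenever a rich vertex is encountered it is added to $S$ and its subtree is set aside; we then recurse into its children to find further rich vertices lower down. The key structural claim is: if any ancestor $u$ in $T$ has $k$ descendants in $S$ with pairwise disjoint subtrees, each giving a $\WeakClosure{h-1, k}$-minor whose leaves are adjacent to $u$, then the $k$ minors combine with $u$ to exhibit $\WeakClosure{h, k}$ as a minor of $G$. Thus, under the hypothesis that $G$ contains no such minor, the branching of $S$ in $T$ is limited by $k-1$ at each ancestor; together with the depth bound $d$, this controls the component sizes of $G[S]$.

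The main obstacle is the leaf-adjacency condition in the definition of rich. Since $G \subseteq \Closure{T}$ is only a subgraph of the closure, the ancestor $u$ need not be adjacent to all its descendants in $G$; yet to assemble $\WeakClosure{h, k}$ (whose root is connected to every leaf) we require the leaf branch sets of each $\WeakClosure{h-1, k}$-minor to lie in $N_G(u)$. The fix is to strengthen the inductive hypothesis so that any extracted $\WeakClosure{h-1, k'}$-minor from $G[T_v]$ has its leaf branch sets entirely in $N_G(v)$. By choosing $k'$ much larger than $k$, a pigeonhole argument on how the leaf branch sets of the richer minor distribute across the layers of $T$ then guarantees at least $k$ suitable leaf branch sets for any prescribed target $u$. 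Propagating this stronger property through the recursion is the heart of the argument, and yields a clustering bound $c(d, k, h)$ that grows rapidly but remains finite, as required by the statement.
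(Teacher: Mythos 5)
Your inductive frame on $h$ does not close, and the two places where it leaks are exactly the places you flag but do not resolve. First, after extracting $S$ you apply the plain inductive hypothesis to $G-S$; if $G-S$ fails to be $(h-2)$-colourable with bounded clustering, all you obtain is \emph{some} \WeakClosure{h-1,k'}-minor in $G-S$, with no control over where its leaf branch sets attach. To promote it to a \WeakClosure{h,k}-minor of $G$ you need one vertex of $G$ adjacent to at least $k$ of its leaf branch sets, and nothing in the construction of $S$ supplies one: deleting the ``rich'' vertices only destroys minors that were already rooted at some vertex's neighbourhood, not spread-out ones. Your proposed fix --- strengthening the induction so that failure always yields a minor whose leaf branch sets all meet $N_G(v)$ --- is false as a dichotomy: let $H$ be \WeakClosure{h-1,K} for huge $K$ with a pendant vertex $v$ attached to its root. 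Then $H$ is not $(h-2)$-colourable with bounded clustering (it contains \Closure{h-1,K-1} as a minor), yet $N_H(v)$ is a single vertex, so no \WeakClosure{h-1,k'}-minor can have two disjoint leaf branch sets meeting $N_H(v)$. Second, the pigeonhole over layers of $T$ cannot manufacture the adjacencies needed at an ancestor $u$: $G$ is only a subgraph of the closure of $T$, so a leaf branch set lying below $u$ (even one meeting $N_G(v)$ for a descendant $v$ of $u$) need not send a single edge to $u$; the required edges may simply be absent. For the same reason your claim that the branching of $S$ below any ancestor is at most $k-1$, and hence that $G[S]$ has components of bounded size, is not justified.

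The paper's proof is structured quite differently and avoids induction on $h$ altogether. For each vertex $v$ it records the \emph{profile} of $v$: all bounded-size ``ranked graphs'' (a graph equipped with level labels and an ancestor partial order, so the attachment pattern to specific ancestors is part of the recorded data) contained in $G[T^+_v]$. Children are merged into groups according to profile multiplicities (a profile occurring between $1$ and $k-1$ times among the children triggers a merge), and the key claim is that across a group boundary every profile member can be \emph{spliced}, i.e.\ its part below the boundary duplicated $k$ times, because $k$ children share the same profile. Then either some group is adjacent in $G$ to at least $h-1$ groups above it, and iterated splicing of an explicit ranked graph (which carries the edges $z_jw_j$ to prescribed ancestors --- precisely the information your pigeonhole cannot recover) produces \WeakClosure{h,k} as a minor, or every group sees at most $h-2$ groups above it, and a greedy $(h-1)$-colouring of the groups has bounded clustering because groups have bounded size. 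To salvage your plan you would essentially have to track rooted configurations together with their exact attachment levels and multiplicities, which is what the profile machinery is doing.
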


\begin{proof}
Throughout this proof, $d$, $k$ and $h$ are fixed, and we make no attempt to optimise $c$. 

We may assume that $G$ is connected. So $G$ is a subgraph of the closure of some rooted tree of depth at most $d$. Choose a tree $T$ of depth at most $d$ rooted at some vertex $r$, such that $G$ is a subgraph of the closure of $T$, and subject to this, $\sum_{v\in V(T)} \dist_T(v,r)$ is minimal. Suppose that $G[T_v]$ is disconnected for some vertex $v$ in $T$. Choose such a vertex $v$ at maximum distance from $r$. Since $G$ is connected, $v\neq r$. By the choice of $v$, for each child $w$ of $v$, the subgraph $G[T_w]$ is connected. Thus, for some child $w$ of $v$, there is no edge in $G$ joining $v$ and $G[T_w]$. Let $u$ be the parent of $v$. Let $T'$ be obtained from $T$ by deleting the edge $vw$ and adding the edge $uw$, so that $w$ is a child of $u$ in $T'$. Note that $G$ is a subgraph of the closure of $T'$ (since $v$ has no neighbour in $G[T_w]$). Moreover, $\dist_{T'}(x,r) = \dist_T(x,r)-1$ for every vertex $x\in V(T_w)$, and $\dist_{T'}(y,r) = \dist_T(y,r)$ for every vertex $y\in V(T)\setminus V(T_w)$. Hence
$\sum_{v\in V(T')} \dist_{T'}(v,r)<\sum_{v\in V(T)} \dist_T(v,r)$, which contradicts our choice of $T$. Therefore $G[T_v]$ is connected for every vertex $v$ of $T$. 

Consider each vertex $v\in V(T)$. Define the \defn{level} $\ell(v):= \dist_T(r,v) \in[0,d-1]$. Let $T^+_v$ be the subtree of $T$ consisting of $T_v$ plus the $vr$-path in $T$, and let $G[T^+_v]$ be the subgraph of $G$ induced by $V(T^+_v)$. For a subtree $X$ of $T$ rooted at vertex $v$, define the \defn{level} $\ell(X):=\ell(v)$. 

A \defn{ranked graph} (for fixed $d$) is a triple $(H,L,\preceq)$ where:
\begin{itemize}
\item $H$ is a graph,
\item $L:V(H)\rightarrow [0,d-1]$ is a function, 
\item $\preceq$ is a partial order on $V(H)$ such that $L(v) < L(w)$ whenever $v \prec w$.
\end{itemize}
Here and throughout this proof, $v\prec w$ means that $v\preceq w$ and $v\neq w$. 
Up to isomorphism, the number of ranked graphs on $n$ vertices is at most $2^{\binom{n}{2}}\,d^n\, 3^{\binom{n}{2}}$. For a vertex $v$ of $T$, a ranked graph $(H,L,\preceq)$ is said to be \defn{contained in} $G[T^+_v]$ if there is an isomorphism $\phi$ from $H$ to some subgraph of $G[T^+_v]$ such that:
\begin{enumerate}[(A)]
	\item for each vertex $v\in V(H)$ we have $L(v)=\ell(\phi(v))$, and 
	\item for all distinct vertices $v,w\in V(H)$ we have that $v\prec w$ if and only if $\phi(v)$ is an ancestor of $\phi(w)$ in $T$. 
\end{enumerate}

Say $(H,L,\preceq)$ is a ranked graph and $i\in[0,d-1]$. Below we define the \defn{$i$-splice} of $(H,L,\preceq)$ to be a particular ranked graph $(H',L',\preceq')$, which (intuitively speaking) is obtained from $(H,L,\preceq)$ by copying $k$ times the subgraph of $H$ induced by the vertices $v$ with $L(v)>i$. Formally, let
\begin{align*}
V(H') := & \{ (v,0) : v \in V(H), L(v) \in [0,i] \} \;\cup \\
 & \{ (v,j) : v \in V(H), L(v) \in[i+1,d], j \in [1,k] \}.\\ 
E(H') := & \{ (v,0)(w,0) : vw \in E(H), L(v) \in[0,i], L(w) \in [0,i] \} \;\cup\\
& \{ (v,0)(w,j) : vw \in E(H), L(v) \in [0,i], L(w) \in[i+1,d], j \in[1,k] \} \; \cup\\
& \{ (v,j)(w,j) : vw \in E(H), L(v)\in[i+1,d], L(w) \in[i+1,d], j \in[1,k] \}.
\end{align*}
Define $L'((v,j)) := L(v)$ for every vertex $(v,j)\in V(H')$.
Now define the following partial order $\preceq'$ on $V(H')$:
\begin{itemize}
	\item $(v,j)\preceq' (v,j)$ for all $(v,j)\in V(H')$;
	\item  if $v \prec w$ and $L(v),L(w) \in[0,i]$, then $(v,0) \prec' (w,0)$;
	\item  if $v \prec w$ and $L(v)\in[0,i]$ and $L(w) \in [i+1,d]$, then $(v,0) \prec' (w,j)$ for all $j\in[1,k]$; and
	\item  if $v \prec w$ and $L(v),L(w) \in [i+1,d]$, then $(v,j) \prec' (w,j)$ for all $j\in[1,k]$.
\end{itemize}
Note that if $(v,a)\prec' (w,b)$, then $a\leq b$ and $v\prec w$ (implying $(L(v)<L(w)$). 
It follows that $\prec'$ is a partial order on $V(H')$ such that $L'((v,a)) < L'((w,b))$ whenever $(v,a) \prec' (w,b)$.
Thus $(H',L',\preceq')$ is a ranked graph. 

For $\ell\in[0,d-1]$, let $$N_\ell:= (d+1)(h-1)(k+1)^{d-1-\ell}.$$ 
For each vertex $v$ of $T$, define the \defn{profile} of $v$ to be the set of all ranked graphs $(H,L,\preceq)$ contained in $G[T_v^+]$  such that $|V(H)| \leq N_{\ell(v)}$. Note that if $v$ is a descendant of $u$, then the profile of $v$ is a subset of the profile of $u$. For $\ell\in[0,d-1]$, if $N=N_\ell$ then let 
$$M_\ell := 2^{ 2^{\binom{N}{2}}\,d^N\, 3^{\binom{N}{2}} }.$$
Then there are at most $M_\ell$ possible profiles of a vertex at level $\ell$.
 
We now partition $V(T)$ into subtrees. Each subtree is called a \defn{group}. 
(At the end of the proof, vertices in a single group will be assigned the same colour.)\ We assign vertices to groups in non-increasing order of their distance from the root. 
Initialise this process by placing each leaf $v$ of $T$ into a singleton group.
We now show how to determine the group of a non-leaf vertex. 
Let $v$ be a vertex not assigned to a group at maximum distance from $r$. 
So each child of $v$ is assigned to a group. 
Let $Y_v$ be the set of children $y$ of $v$, such that the number of children of $v$ that have the same profile as $y$ is in the range $[1,k-1]$. If $Y_v =\emptyset$ start a new singleton group $\{v\}$. 
If $Y_v\neq\emptyset$ then merge all the groups rooted at vertices in $Y_v$ into one group including $v$. 
This defines our partition of $V(T)$ into groups. Each group $X$ is \defn{rooted} at the vertex in $X$ closest to $r$ in $T$. A group $Y$ is \defn{above} a distinct group $X$ if the root of $Y$ is on the path in $T$ from the root of $X$ to $r$. 

The next claim is the key to the remainder of the proof. 

\begin{claim}
Let $uv\in E(T)$ where $u$ is the parent of $v$, and $u$ is in a different group to $v$. Then for every ranked graph $(H,L,\preceq)$ in the profile of $v$, 
the $\ell(u)$-splice of $(H,L,\preceq)$ is in the profile of $u$. 
\end{claim}

\begin{proof}
Since $(H,L,\preceq)$ is in the profile of $v$, there is an isomorphism $\phi$ from $H$ to some subgraph of $G[T^+_v]$ such that  for each vertex $x\in V(H)$ we have $L(x)=\ell(\phi(x))$, and for all distinct vertices $x,y\in V(H)$ we have that $x\prec y$ if and only if $\phi(x)$ is an ancestor of $\phi(y)$ in $T$. 

Since $u$ and $v$ are in different groups, there are $k$ children $y_1,\dots,y_k$ of $u$ (one of which is $v$) such that the profiles of $y_1,\dots,y_k$ are equal. Thus $(H,L,\preceq)$ is in the profile of each of $y_1,\dots,y_k$.
That is, for each $j\in[1,k]$, there is an isomorphism $\phi_j$ from $H$ to some subgraph of $G[T^+_{y_j}]$ such that  for each vertex $x\in V(H)$ we have $L(x)=\ell(\phi_j(x))$, and for all distinct vertices $x,y\in V(H)$ we have that $x\prec y$ if and only if $\phi_j(x)$ is an ancestor of $\phi_j(y)$ in $T$. 

Let $(H',L',\preceq')$ be the $\ell(u)$-splice of $(H,L,\preceq)$. 
We now define a function $\phi'$ from $V(H')$ to $V( G[ T^+_u] )$. 
For each vertex $(x,0)$ of $H'$ (thus with $x\in V(H)$ and $L(x)\in[0,\ell(u)]$), define $\phi'((x,0)) := \phi(x)$. 
For every other vertex $(x,j)$ of $H'$ (thus with $x\in V(H)$ and $L(x)\in[\ell(u)+1,d-1]$ and $j\in[1,k]$), define $\phi'((x,j)) := \phi_j(x)$. 

We now show that $\phi'$ is an isomorphism from $H'$ to a subgraph of $G[T^+_u]$.
Consider an edge $(x,a)(y,b)$ of $H'$. Thus $xy\in E(H)$. 
It suffices to show that  $\phi'((x,a))\phi'((y,b))\in E(G[T^+_u])$. 
First suppose that $a=b=0$. So $L(x)\in[0,\ell(u)]$ and $L(y)\in[0,\ell(u)]$. 
Thus $\phi'((x,a))=\phi(x)$ and $\phi'((y,b))=\phi(y)$. 
Since $\phi$ is an isomorphism to a subgraph of $G[ T^+_v]$, we have 
$\phi(x)\phi(y) \in E(G[ T^+_v] )$, which is a subgraph of $G[ T^+_u] $.
Hence $\phi'((x,a))\phi'((y,b)) \in E(G[ T^+_u] ) $, as desired. 
Now suppose that $a=0$ and $b\in[1,k]$.
Thus $\phi'((x,a))=\phi(x)$ and $\phi'((y,b))=\phi_b(y)$. 
Moreover, both $\ell(\phi(x))$ and $\ell(\phi_b(x))$ equal $L(x)\in[0,\ell(u)]$. 
There is only vertex $z$ in $T^+_v$ with $\ell(z)$ equal to a specific number in $[0,\ell(u)]$. Thus $\phi'((x,a))=\phi(x)=\phi_b(x)$ ($=z$). 
Since $\phi_b$ is an isomorphism to a subgraph of $G[ T^+_{y_b}]$, we have 
$\phi_b(x)\phi_b(y) \in E(G[ T^+_{y_b}] )$, which is a subgraph of $G[ T^+_u] $.
Hence $\phi'((x,a))\phi'((y,b)) \in E(G[ T^+_u] ) $, as desired. 
Finally, suppose that $a=b\in[1,k]$.
Thus $\phi'((x,a))=\phi_a(x)$ and $\phi'((y,b))=\phi_b(y)=\phi_a(y)$. 
Since $\phi_a$ is an isomorphism to a subgraph of $G[ T^+_{y_a}]$, we have 
$\phi_a(x)\phi_a(y) \in E(G[ T^+_{y_a}] )$, which is a subgraph of $G[ T^+_u] $.
Hence $\phi'((x,a))\phi'((y,b)) \in E(G[ T^+_u] ) $, as desired. 
This shows that $\phi'$ is an isomorphism from $H'$ to a subgraph of $G[T^+_u]$.

We now verify property (A) for $(H',L',\preceq')$. 
For each vertex $(x,0)$ of $H'$ (thus with $x\in V(H)$ and $L(x)\in[0,\ell(u)]$) we have $L'((x,0))=L(x)= \ell(\phi(x)) = \ell(\phi'((x,0)))$, as desired. 
For every other vertex $(x,j)$ of $H'$ (thus with $x\in V(H)$ and $L(x)\in[\ell(u)+1,d-1]$ and $j\in[1,k]$) we have 
$L'((x,j))=L(x)= \ell(\phi_j(x)) = \ell(\phi'((x,j)))$, as desired. 
Hence property (A) is satisfied  for $(H',L',\preceq')$. 

We now verify property (B) for $(H',L',\preceq')$. 
Consider distinct vertices $(x,a),(y,b)\in V(H')$. 
First suppose that $a=0$ and $b=0$. 
Then 
$(x,a) \prec' (y,b)$ if and only if 
$x \prec y$ if and only if 
$\phi(x)$ is an ancestor of $\phi(y)$ in $T$ if and only if 
$\phi'((x,a))$ is an ancestor of $\phi'((y,b))$ in $T$, as desired. 
Now suppose that $a=0$ and $b\in[1,k]$. 
Then 
$(x,a) \prec' (y,b)$ if and only if 
$x \prec y$ if and only if 
$\phi(x)$ is an ancestor of $\phi_b(y)$ in $T$ if and only if 
$\phi'((x,a))$ is an ancestor of $\phi'((y,b))$ in $T$, as desired. 
Now suppose that $a=b\in[1,k]$. 
Then 
$(x,a) \prec' (y,b)$ if and only if 
$x \prec y$ if and only if 
$\phi_a(x)$ is an ancestor of $\phi_b(y)$ in $T$ if and only if 
$\phi'((x,a))$ is an ancestor of $\phi'((y,b))$ in $T$, as desired. 
Finally, suppose that $a,b\in[1,k]$ and $a\neq b$. 
Then $(x,a)$ and $(y,b)$ are incomparable under $\prec'$, and 
$\phi'((x,a))$ and $\phi'((y,b))$ in $T$ are unrelated in $T$, as desired. 
Hence property (B) is satisfied  for $(H',L',\preceq')$. 

So $\phi'$ is an isomorphism from $H'$ to a subgraph of $G[T^+_u]$ satisfying properties (A) and (B). Thus $(H',L',\preceq')$ is contained in $G[T^+_u]$, as desired. Since $(H,L,\preceq)$ is in the profile of $v$, we have $|V(H)| \leq (d+1)(h-1)(k+1)^{h-\ell(v)}$. Since $|V(H')|\leq (k+1) |V(H)|$ and $\ell(u)=\ell(v)-1$, we have $|V(H')| \leq  (d+1)(h-1)(k+1)^{h+1-\ell(v)} =  (d+1)(h-1)(k+1)^{h-\ell(u)}$. Thus $(H',L',\preceq')$ is in the profile of $u$. 
\end{proof}

The proof now divides into two cases. If some group $X_0$ is adjacent in $G$ to at least $h-1$ other groups above $X_0$, then we show that $G$ contains $\WeakClosure{h,k}$ as a minor. Otherwise, every group $X$ is adjacent in $G$ to at most $h-2$ other groups above $X$, in which case we show that $G$ is $(h-1)$-colourable with bounded clustering. 

\subsubsection*{Finding the Minor}

Suppose that some group $X_0$ is adjacent in $G$ to at least $h-1$ other groups $X_1,\dots,X_{h-1}$ above $X_0$. We now show that $G$ contains $\WeakClosure{h,k}$ as a minor; refer to \cref{ConstructMinor}. 
For $i\in[1,h-1]$, since $X_i$ is above $X_0$, 
the root $v_i$ of $X_i$ is on the $v_0r$-path in $T$. 
Without loss of generality, 
$v_0,v_1,\dots,v_{h-1}$ appear in this order on the $v_0r$-path in $T$. 
For $i\in[1,h-1]$, let $w_i$ be a vertex in $X_i$ adjacent to some vertex $z_i$ in $X_0$; since $G$ is a subgraph of the closure of $T$, $w_i$ is on the $v_0r$-path in $T$. For $i\in[0,h-2]$, let $u_i$ be the parent of $v_i$ in $T$ (which exists since $v_{h-2}\neq r$). 
So $u_i$ is not in $X_i$ (but may be in $X_{i+1}$). Note that $v_0,u_0,w_1,v_1,u_1,\dots,w_{h-2},v_{h-2},u_{h-2},w_{h-1},v_{h-1}$ appear in this order on the $v_0r$-path in $T$, where $v_0,v_1,\dots,v_{h-1}$ are distinct (since they are in distinct groups). 

\begin{figure}[h]
\centering\includegraphics[width=\textwidth]{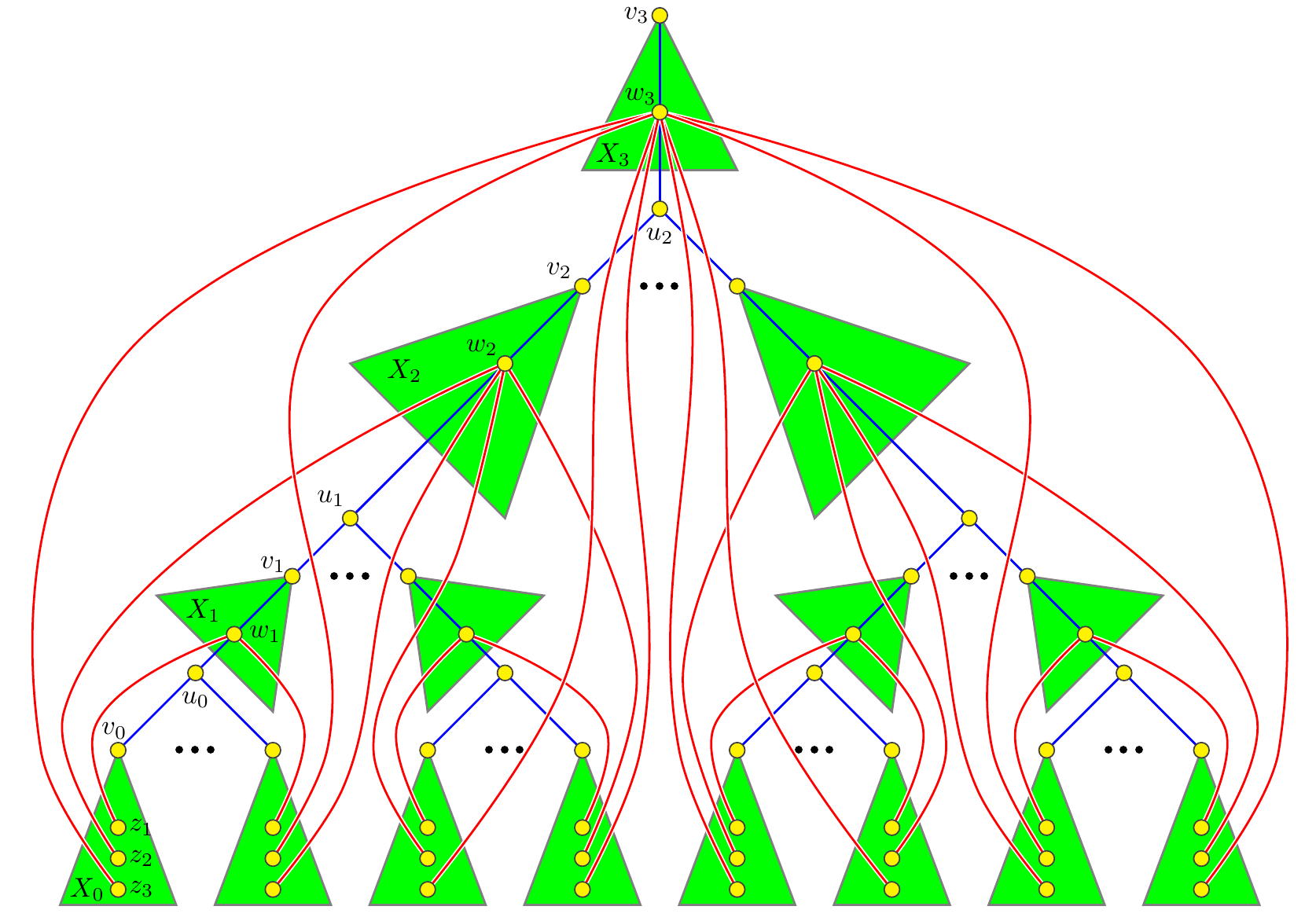}
\caption{Construction of a \WeakClosure{4,k} minor (where $u_i$ might be in $X_{i+1}$). 
\label{ConstructMinor}}
\end{figure}

Let $P_j$ be the $z_jr$-path in $T$ for $j\in[1,h-1]$. 
Let $H_0$ be the graph with $V(H_0):=V( P_1\cup\dots\cup P_{h-1} )$ and $E(H_0) := \{z_jw_j:j\in[1,h-1]\}$. Define the function $L_0:V(H_0)\to[0,d-1]$ by $L_0(x):=\ell(x)$ for each $x\in V(H_0)$. Define the partial order $\preceq_0$ on $V(H_0)$, where $x\prec_0 y$ if and only if $x$ is ancestor of $y$ in $T$. Thus $(H_0,L_0,\preceq_0)$ is a ranked graph. By construction, 
$(H_0,L_0,\preceq_0)$ is contained in $G[T^+_{v_0}]$. Since $H_0$ has less than $(d+1)(h-1)$ vertices, $H_0$ is in the profile of $v_0$. For $i=0,1,\dots,h-2$, let $(H_{i+1},L_{i+1},\prec_{i+1})$ be the $\ell(u_i)$-splice of $(H_i,L_i,\prec_i)$. 

By induction on $i$, using Claim~1 at each step and since $G[T^+_{u_i}]\subseteq G[T^+_{v_{i+1}}]$, we conclude that for each $i\in[0,h-1]$, the ranked graph $(H_i,L_i,\preceq_i)$ is in the profile of $v_i$. 
In particular, $(H_{h-1},L_{h-1},\prec_{h-1})$ is in the profile of $v_{h-1}$, and $H_{h-1}$  is isomorphic to a subgraph of $G$. Note that each vertex of $H_{h-1}$ is of the form $(((\dots(x,d_1),d_2),\dots),d_{h-1})$ for some $x\in V(H_0)$ and $d_1,\dots,d_{h-1}\in[0,k]$. For brevity, call such a vertex $x\langle{d_1,\dots,d_{h-1}\rangle}$. Note that if $x=w_j$ for some $j\in[1,h-1]$, then $d_1=\dots=d_j=0$ (since $w_j$ is above $u_i$ whenever $i<j$, and $(H_{i+1},L_{i+1},\prec_{i+1})$ is the $\ell(u_i)$-splice of $(H_i,L_i,\preceq_i)$). 

For $x\in V(H_0)$, let $\Lambda_x$ be the set of vertices $x\langle{d_1,\dots,d_{h-1}\rangle}$ in $H_{h-1}$. By construction, no two vertices in $\Lambda_x$ are comparable under $\preceq_{h-1}$.
Therefore, by property (B), $V(T_a)\cap V(T_b)=\emptyset$ for all distinct $a,b\in \Lambda_x$. 
In particular, $V(T_{a})\cap V(T_b)=\emptyset$ for all distinct $a,b\in \Lambda_{v_0}$. As proved above, $G[T_a] $ is connected for each $a\in V(T)$. Let $G'$ be the graph obtained from $G$ by contracting $G[T_a]$ into a single vertex $\alpha\langle{d_1,\dots,d_{h-1}\rangle}$, for each $a=v_0\langle{d_1,\dots,d_{h-1}\rangle} \in \Lambda_{v_0}$. So $G'$ is a minor of $G$. 
	
Let $U$ be the tree with vertex set 
\begin{equation*}
\{ \langle{ d_1,\dots,d_{h-1} \rangle}: \exists j\in[0,h-1] \;d_1=\dots=d_j=0\text{ and }d_{j+1},\dots,d_{h-1} \in[1,k] \},
\end{equation*}
where the parent of $(0,\dots,0,d_{j+1},d_{j+2},\dots,d_{h-1})$ is  $(0,\dots,0,d_{j+2},\dots,d_{h-1})$. Then $U$ is isomorphic to the complete $k$-tree of height $h$ rooted at $\langle 0,\dots,0\rangle$. 
We now show that the weak closure of $U$ is a subgraph of $G$', where 
each vertex $\langle 0,\dots,0,d_{j+1},\dots,d_{h-1}\rangle$ of $U$ with $j\in[1,h-1]$ is mapped to vertex $w_j\langle 0,\dots,0,d_{j+1},\dots,d_{h-1}\rangle$ of $G'$, and 
each other vertex $\langle d_1,\dots,d_{h-1}\rangle$ of $U$ is mapped to $\alpha\langle d_1,\dots,d_{h-1}\rangle$ of $G'$. 
For all $d_1,\dots,d_{h-1}\in[1,k]$ and $j\in[1,h-1]$ the vertex $z_j\langle{d_1,\dots,d_{h-1}\rangle}$ of $G$ is contracted into the vertex 
$\alpha\langle{d_1,\dots,d_{h-1}\rangle}$ of $G'$. 
By construction, 
$z_j\langle{d_1,\dots,d_{h-1}\rangle}$ is adjacent to 
$w_j\langle{0,\dots,0,d_{j+1},\dots,d_{h-1}\rangle}$ in $G$. 
So $\alpha\langle{d_1,\dots,d_{h-1}\rangle}$ is adjacent to 
$w_j\langle{0,\dots,0,d_{j+1},\dots,d_{h-1}\rangle}$ in $G'$. 
This implies that the weak closure of $U$ (that is, \WeakClosure{h,k}) is isomorphic to a subgraph of $G$', and is therefore a minor of $G$.

\subsubsection*{Finding the Colouring}

Now assume that every group $X$ is adjacent in $G$ to at most $h-2$ other groups above $X$. Then $(h-1)$-colour the groups in order of distance from the root, such that every group $X$ is assigned a colour different from the colours assigned to the neighbouring groups above $X$. Assign each vertex within a group the same colour as that assigned to the whole group. This defines an $(h-1)$-colouring of $G$.

Consider the function $s:[0,d-1]\to\bb{N}$ recursively defined by 
\begin{equation*}
s(\ell) := \begin{cases}
1 & \text{ if }\ell=d-1\\
(k-1) \cdot M_{\ell+1} \cdot s(\ell+1) & \text{ if } \ell\in[0,d-2].
\end{cases}
\end{equation*}
Then every group at level $\ell$ has at most $s(\ell)$ vertices. By construction, our $(h-1)$-colouring of $G$ has clustering $s(0)$, which is bounded by a function of $d$, $k$ and $h$, as desired. 
\end{proof}

\section{Pathwidth}
\label{Pathwidth}

The following lemma of independent interest is the key to proving \cref{MainMain}. Note that \citet{Eppstein20} independently discovered the same result (with a slightly weaker bound on the path length). The decomposition method in the proof has been previously used, for example,  by \citet[Lemma~17]{DJKW16}. 

\begin{lem}
\label{Pathwidth2ColourPathlength}
Every graph with pathwidth at most $w$ has a vertex 2-colouring such that each monochromatic path has at most $(w+3)^w$ vertices. 
\end{lem}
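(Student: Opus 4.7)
\textit{Proof plan.} I would proceed by induction on $w$, with $f(w) := (w+3)^w$. The base case $w=0$ is immediate because a graph with pathwidth $0$ has no edges, so any $2$-colouring has monochromatic paths of a single vertex.

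For the inductive step, given a path decomposition $(B_1,\dots,B_n)$ of width $w \geq 1$, I would partition the bag sequence greedily into maximal consecutive \emph{blocks} $P_1,\dots,P_m$, where $P_j=[a_j,b_j]$ with $a_{j+1}=b_j+1$ is the longest interval satisfying $\bigcap_{i \in P_j} B_i \neq \emptyset$. Pick a common vertex $v_j \in \bigcap_{i \in P_j} B_i$ for each block. The first technical claim would be that $G' := G - \{v_1,\dots,v_m\}$ has pathwidth at most $w-1$. For this I would show, using the interval property and the maximality of the blocks, that within any bag $B_i$ with $i \in P_j$ the only members of $\{v_1,\dots,v_m\}$ that can appear are $v_j$ (by definition) and possibly $v_{j+1}$ (whose interval might extend leftward into $P_j$); any $v_k$ with $k \leq j-1$ or $k \geq j+2$ would have to belong to all bags of $P_{j-1}$ or $P_{j+1}$ (respectively) and also to the neighbouring block's first bag, contradicting maximality. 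Hence each restricted bag has size at most $w$, so $G'$ has pathwidth at most $w-1$.

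By induction, $G'$ has a $2$-colouring $c'$ with monochromatic paths of length at most $f(w-1)=(w+2)^{w-1}$. I would extend $c'$ to a colouring $c$ of $G$ by assigning colours to the $v_j$ according to a rule based on the block index, for example $c(v_j) = \lfloor j/L \rfloor \bmod 2$ for a carefully chosen $L$, so that same-coloured common vertices come in consecutive ``runs'' of bounded length separated by opposite-coloured runs. Then I would analyse an arbitrary monochromatic path $P$ in $G$: it decomposes into $G'$-segments (monochromatic under $c'$, hence of length $\leq f(w-1)$) interspersed with common vertices. Since $v_j$ is adjacent in $G$ only to vertices of bags in $P_{j-1}\cup P_j$, the only common vertices adjacent to $v_j$ are $v_{j-1}$ and $v_{j+1}$; so a monochromatic path containing two same-coloured common vertices must use a $G'$-segment to ``bridge'' intermediate blocks whose own common vertices are oppositely coloured. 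The key structural fact I would use is that any vertex of $G'$ is in bags spanning at most three consecutive blocks (otherwise it would be a common vertex of two consecutive blocks, contradicting maximality), which controls how far a $G'$-segment of length $\leq f(w-1)$ can reach through the block sequence.

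\emph{Main obstacle.} The hard part will be executing the last step cleanly: calibrating $L$ and carrying out the combinatorial bookkeeping to show that the number of common vertices on any monochromatic path, times the segment-length bound $f(w-1)$, fits inside $f(w)=(w+3)^w$. This requires careful interaction between the block-spanning bound for $G'$-vertices, the alternating colouring of the $v_j$'s, and the inductive bound on segment length; the factor $(w+3)$ between $f(w-1)$ and $f(w)$ leaves little slack and is where the combinatorics must be tight.
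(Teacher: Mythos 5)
There is a genuine gap, and it sits precisely where you flag the ``main obstacle'': the bookkeeping in your last step cannot be made to fit inside $(w+3)^w$, because your scheme forces a quadratic (not linear) recursion. Since consecutive common vertices $v_j,v_{j+1}$ can be adjacent, you cannot colour them alternately; you must use runs of some length $L$, and for the argument to confine a monochromatic path to a single run you need $L$ large enough that no single $G'$-segment (of up to $f(w-1)$ vertices, each vertex meeting at most two consecutive blocks) can bridge across an oppositely coloured run, i.e.\ $L \gtrsim 2f(w-1)$. But then a monochromatic path may pick up all $L$ common vertices of one run, with a full-length $G'$-segment between consecutive ones, giving a bound of order $L\cdot f(w-1)\approx 2f(w-1)^2$. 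The recursion $g(w)\approx 2\,g(w-1)^2$ yields a doubly exponential bound in $w$, not $(w+3)^w$; the number of specially coloured vertices that a monochromatic path can contain is tied to $L$, which itself grows with $f(w-1)$, and no choice of $L$ escapes this. (Your pathwidth-reduction claim is fine, and in fact simpler than you make it: every bag of block $P_j$ contains $v_j$, so deleting the $v_j$'s shrinks every bag; also a $G'$-vertex meets at most two, not three, consecutive blocks. Neither point is the problem.)

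The paper avoids this by making the specially coloured objects \emph{entire bags} rather than single vertices: it picks $t_1=1$ and, iteratively, the least $t_i$ with $B_{t_i}\cap B_{t_{i-1}}=\emptyset$, colours the bags $B_{t_1},B_{t_2},\dots$ alternately red and blue, and recurses on the uncoloured vertices (every intermediate bag meets the preceding $B_{t_{i-1}}$, so the induced decomposition has width at most $w-1$). A whole bag of one colour is a separator, so a monochromatic path of the other colour cannot cross it; consequently each monochromatic component lies between two consecutive same-coloured special bags and meets only the single special bag $B_{t_{i+1}}$ in between. Hence a monochromatic path contains at most $w+1$ specially coloured vertices, hence at most $w+2$ monochromatic $G'$-subpaths, giving $(w+2)(w+3)^{w-1}+(w+1)<(w+3)^w$ --- a linear factor per induction step, which is exactly what your one-vertex-per-block scheme cannot deliver. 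If you want to salvage your plan, note that your weaker doubly exponential bound would still suffice for the applications in the paper (which only need bounded treedepth of the monochromatic components), but it does not prove the lemma as stated.
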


\begin{proof}
We proceed by induction on $w\geq 1$. Every graph with pathwidth 1 is a caterpillar, and is thus properly 2-colourable.  Now assume $w\geq 2$ and the result holds for graphs with pathwidth at most $w-1$. 
Let $G$ be a graph with pathwidth at most $w$. 
Let $(B_1,\dots,B_n)$ be a path-decomposition of $G$ with width at most $w$. 
Let $t_1,t_2,\dots,t_m$ be a maximal sequence such that $t_1=1$ and 
for each $i\geq 2$, $t_i$ is the minimum integer such that 
$B_{t_i} \cap B_{t_{i-1}}=\emptyset$.
For odd $i$, colour every vertex in $B_{t_i}$ `red'. 
For even $i$, colour every vertex in $B_{t_i}$ `blue'. 
Since $B_{t_i} \cap B_{t_{i-1}}=\emptyset$ for $i\geq 2$, no vertex is coloured twice. 
Let $G'$ be the subgraph of $G$ induced by the uncoloured vertices. 
By the choice of $B_{t_i}$,  for $i\geq 2$ each bag $B_j$ with $j\in[t_{i-1}+1,t_i-1]$ intersects $B_{t_{i-1}}$. 
Thus $(B_1\cap V(G'),\dots,B_n\cap V(G'))$ is a path-decomposition of $G'$ of width at most $w-1$. 
By induction, $G'$ has a vertex 2-colouring such that each monochromatic path has at most $(w+3)^{w-1}$ vertices. 
Since $B_{t_i}\cup B_{t_{i+2}}$ separates  $B_{t_i+1}\cup\dots\cup B_{t_{i+2}-1}$ from the rest of $G$,
each monochromatic component of $G$ is contained in 
$B_{t_i+1}\cup\dots\cup B_{t_{i+2}-1}$ for some $i\in[0,n-2]$. 
Consider a monochromatic path $P$ in $G[ B_{t_i+1}\cup\dots\cup B_{t_{i+2}-1} ] $. 
Then $P$ has at most $w+1$ vertices in $B_{t_{i+1}}$. 
Note that $P- B_{t_{i+1}}$ is contained in $G'$. 
Thus $P$ consists of up to $w+2$ monochromatic subpaths in $G'$ plus $w+1$ vertices in $B_{t_{i+1}}$.
Hence $P$ has at most $(w+2) (w+3)^{w-1} + (w+1) < (w+3)^{w}$  vertices. 
\end{proof}

\citet{Sparsity} showed  that if a graph $G$ contains no path on $k$ vertices, then $\td(G)<k$ (since $G$ is a subgraph of the closure of a DFS spanning tree with height at most $k$). 
Thus \cref{Pathwidth2ColourPathlength} implies: 

\begin{cor}
\label{PathwidthTreedepth}
Every graph with pathwidth at most $w$ has a vertex 2-colouring such that each monochromatic component has treedepth at most $(w+3)^w$.
\end{cor}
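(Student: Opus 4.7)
The plan is to derive this corollary as an essentially immediate consequence of \cref{Pathwidth2ColourPathlength} combined with the fact (attributed to \citet{Sparsity} in the paragraph above) that a graph containing no path on $k$ vertices has treedepth less than $k$.

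First, I would apply \cref{Pathwidth2ColourPathlength} to the given graph $G$ of pathwidth at most $w$, obtaining a vertex 2-colouring in which every monochromatic path has at most $(w+3)^w$ vertices. Let $C$ be any monochromatic component of $G$ under this colouring. Since any path in $C$ is in particular a monochromatic path in $G$, the graph $C$ contains no path on $(w+3)^w + 1$ vertices.

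Next, I would invoke the cited result from \citep{Sparsity}: since $C$ has no path on $k := (w+3)^w + 1$ vertices, $\td(C) < k$, i.e., $\td(C) \leq (w+3)^w$. As this holds for every monochromatic component, the 2-colouring produced by \cref{Pathwidth2ColourPathlength} is precisely the one claimed.

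There is no real obstacle: the work is done by \cref{Pathwidth2ColourPathlength} and by the DFS-tree argument bounding treedepth by path length. The only thing to be careful about is the off-by-one in translating ``at most $(w+3)^w$ vertices in any monochromatic path'' into ``no path on $(w+3)^w+1$ vertices'', which yields the bound $\td \leq (w+3)^w$ (not $(w+3)^w - 1$) via the strict inequality in the cited statement.
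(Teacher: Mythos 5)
Your proposal is correct and follows exactly the paper's route: the corollary is obtained by applying \cref{Pathwidth2ColourPathlength} and then the cited fact from \citep{Sparsity} that a graph with no path on $k$ vertices has treedepth less than $k$. Your handling of the off-by-one (no path on $(w+3)^w+1$ vertices gives $\td \leq (w+3)^w$) is also exactly what the paper's bound requires.
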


\begin{proof}[Proof of \cref{MainMain}] 
Let $\GG$ be a minor-closed class of graphs, each with pathwidth at most $w$. 
Let $h$ be the minimum integer such that $\Closure{h,k} \not\in \GG$ for some $k\in\mathbb{N}$. 
Consider $G\in\GG$. 
Thus \WeakClosure{h,k+1} is not a minor of $G$ (since \Closure{h,k} is a minor of \WeakClosure{h,k+1}, as noted above). 
By \cref{PathwidthTreedepth}, $G$ has a vertex 2-colouring such that each monochromatic component $H$ of $G$ has treedepth at most $(w+3)^w$. 
Thus \WeakClosure{h,k+1} is not a minor of $H$.
By \cref{UpperBound}, 
$H$ is $(h-1)$-colourable with clustering $c((w+3)^w,k+1,h)$. 
Taking a product colouring, $G$ is $(2h-2)$-colourable with clustering $c((w+3)^w,k+1,h)$. 
Hence $ \dchi(\GG) \leq \cchi(\GG) \leq 2h-2$. 
\end{proof}

Note that \cref{Pathwidth2ColourPathlength} cannot be extended to the setting of bounded tree-width graphs: Esperet and Joret (see \citep[Theorem~4.1]{LO17}) proved that for all positive integers $w$ and $d$ there exists a graph $G$ with tree-width at most $w$ such that for every $w$-colouring of $G$ there exists a monochromatic component of $G$ with diameter greater than $d$ (and thus with a monochromatic path on more than $d$ vertices, and thus with treedepth at least $\log_2 d$). 

\section{Fractional Colouring}
\label{FractionalColouring}

This section proves \cref{FractionalChromaticNumber}. The starting point is the following key result of 
\citet{DS20}.\footnote{\citet{DS20} expressed their result in the terms of ``treedepth fragility''. The sentence ``proper minor-closed classes are fractionally treedepth-fragile'' after Theorem~31 in \cite{DS20} is equivalent to \cref{t:DS}. Informally speaking, \cref{t:DS} shows that the fractional ``treedepth'' chromatic number of every minor-closed class equals 1.}

\begin{thm}[\citep{DS20}]\label{t:DS}
	For every proper minor-closed class $\GG$ and every $\delta > 0$ there exists $d \in \bb{N}$ satisfying the following. For every $G \in \GG$ there exist $s \in \bb{N}$ and $X_1,X_2, \ldots, X_s \subseteq V(G)$ such that:  
	\begin{itemize}
		\item $\td(G[X_i]) \leq d$, and
		\item every $v \in V(G)$ belongs to at least $(1 - \delta)s$ of these sets.
	\end{itemize} 
\end{thm}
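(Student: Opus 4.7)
The plan is to combine the Graph Minor Structure Theorem (GMST) of Robertson and Seymour with an iterated BFS-layering argument, organised in three stages.

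\emph{Stage 1 (Reduction via GMST).} By the GMST, there exist constants $g,a,b,c$ depending only on $\GG$ such that every $G \in \GG$ admits a tree decomposition whose torsos are $(a,b,c,g)$-almost-embeddable on a surface of Euler genus at most $g$. The apex vertices and vortex vertices contribute only boundedly many vertices per torso, so they can be placed in the exceptional set $V(G) \setminus X_i$ at negligible density cost. Shared cliques between adjacent torsos have bounded size, so a standard clique-sum gluing argument (of the kind used elsewhere in the defective colouring literature) assembles a global fractional treedepth-bounded covering on $G$ out of coverings on each torso, incurring only an additive loss in the treedepth bound. Thus it suffices to prove the statement for a single graph $H$ embedded on a surface of Euler genus at most $g$.

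\emph{Stage 2 (BFS layering on surface graphs).} Fix any root $r \in V(H)$ and let $L_0, L_1, L_2, \ldots$ be the BFS layers from $r$. For a parameter $p$ and a uniformly random shift $\sigma \in \{0, \ldots, p-1\}$, set $X_\sigma := V(H) \setminus \bigcup_{i \equiv \sigma \pmod p} L_i$. Each vertex lies in exactly $p-1$ of the $X_\sigma$, and a classical result of Robertson--Seymour states that the union of any $p$ consecutive BFS layers in $H$ induces a subgraph of treewidth $O(pg)$. Choosing $p \approx 2/\delta$ gives a first layer of the fractional covering with deletion probability $\leq \delta/2$ and pieces of bounded treewidth that still sit on a surface of bounded genus.

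\emph{Stage 3 (main obstacle: treewidth to treedepth).} The crux of the proof is upgrading bounded treewidth to bounded \emph{treedepth}. The plan is to iterate the layering: within each bounded-treewidth surface subgraph produced by Stage 2, apply BFS from a fresh root and delete every $p_j$-th layer with an independent random shift. Each iteration roughly halves the metric depth of the surviving pieces while preserving the surface embedding, so after $O(\log(1/\delta))$ rounds each remaining piece has bounded radius inside a bounded-genus surface together with bounded treewidth. A separate structural argument then shows that such pieces have treedepth bounded purely in terms of $\delta$ and $\GG$. The parameters $p_j$ are chosen so that $\sum_j 1/p_j \leq \delta$, and a union bound over iterations yields the required covering.

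The hardest step is Stage 3: verifying that the iterated layering actually terminates with bounded-treedepth pieces, not merely pieces of bounded treewidth or bounded radius, and tracking how the treedepth and density parameters compound across rounds. This requires a careful inductive argument on the Euler genus and the nesting depth, exploiting the fact that surface structure persists under iterated BFS layering and interacts well with the tree decompositions produced at each layer.
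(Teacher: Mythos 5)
The paper does not actually prove \cref{t:DS}: it is imported verbatim from \citet{DS20} (the footnote only explains how to translate their ``fractional treedepth-fragility'' statement), so there is no internal proof to compare your sketch against; what you are attempting is a proof of Dvo\v{r}\'ak--Sereni's theorem itself. Judged on its own terms, the sketch has a genuine gap exactly where you flag the ``hardest step''. In Stage 3 you claim that a piece which has bounded radius, bounded treewidth and lives on a bounded-genus surface has bounded treedepth. This is false: the fan (a path $v_1\dots v_n$ plus one vertex adjacent to all $v_i$) is planar, has radius $1$ and treewidth $2$, yet its treedepth is $\Theta(\log n)$. Worse, BFS layering makes no progress on such graphs: from any root the fan has at most three BFS layers, so deleting every $p$-th layer with a random shift leaves the entire graph untouched with probability $1-O(1/p)$, and iterating the procedure never decreases the treedepth of the surviving pieces. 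The passage from bounded treewidth (or from surface structure) to bounded treedepth is precisely the delicate part of the real argument, and it cannot be done with metric BFS layers; one has to work with the tree decomposition itself (e.g.\ deleting bags along a recursive/balanced hierarchy of the decomposition tree), which is where Dvo\v{r}\'ak and Sereni's proof does its work.

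Stage 1 also overreaches. Vortex vertices are \emph{not} boundedly many per torso --- a vortex has bounded width/depth but can contain arbitrarily many vertices --- and, more importantly, you cannot ``place the apex and vortex vertices in the exceptional set'': the theorem requires \emph{every} vertex of $G$ to lie in at least $(1-\delta)s$ of the sets $X_i$, so no vertex may be permanently discarded (apices can be absorbed at a $+O(1)$ cost in treedepth, but this needs to be said, and vortices need a genuine argument). Finally, assembling fractional covers across an unbounded tree of clique-sums so that the treedepth bound survives is not a routine ``standard gluing''; it needs an explicit mechanism (typically a layering over the decomposition tree), and as written it is asserted rather than proved.
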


We now prove a lower bound on the fractional defective chromatic number of the closure of complete trees of given height. 

\begin{lem}\label{l:lower}
	Let $\mc{C}_h := \{C \la h,k\ra\}_{k \in \bb{N}}$. Then $\dfchi(\mc{C}_h) \geq h$.
\end{lem}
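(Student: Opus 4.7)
I plan to prove this by induction on $h$. The base case $h = 1$ is immediate: $\Closure{1, k}$ is a single vertex, so the coverage constraint forces total weight $\geq 1$.

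For the inductive step, I would suppose towards contradiction that $\dfchi(\mc{C}_h) < h$, so there exist $t < h$ and $d \in \bb{N}$ such that every $\Closure{h, k}$ admits a fractional $t$-coloring with defect $d$. Write $\epsilon := h - t > 0$, $\tau := \epsilon/h$, and $t' := (t - 1)/(1 - \tau)$; a short computation gives $t' = (h - 1) - \epsilon/(h - \epsilon) < h - 1$. By the induction hypothesis applied with parameters $t'$ and $d$, there is some $k^* \in \bb{N}$ such that $\Closure{h - 1, k^*}$ is not fractionally $t'$-colorable with defect $d$. I would then pick $k \geq k^*$ large enough for the averaging argument below.

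The engine of the argument is to ``peel off'' the root. Fix a fractional $t$-coloring $(Y_i, \alpha_i)_{i = 1}^s$ of $G := \Closure{h,k}$ with defect $d$. The root $r$ is adjacent to every other vertex, so any $Y_i$ containing $r$ satisfies $|Y_i| \leq d + 1$. Set $A_0 := \sum_{i : r \in Y_i} \alpha_i \geq 1$ and $\delta_v := \sum_{i : r \in Y_i,\, v \in Y_i} \alpha_i$ for $v \neq r$, giving $\sum_{v \neq r} \delta_v = \sum_{i : r \in Y_i} \alpha_i (|Y_i|-1) \leq d A_0 \leq dt$. Let $S := \{v \neq r : \delta_v > \tau\}$; by Markov's inequality $|S| \leq dt/\tau$, a bound independent of $k$. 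The $k$ subtrees $T_1, \dots, T_k$ rooted at the children of $r$ partition $V(G) \setminus \{r\}$, so if $k > |S|$ some $T_j$ satisfies $V(T_j) \cap S = \emptyset$; for this $j$, the induced subgraph $G[V(T_j)]$ is precisely $\Closure{h - 1, k}$.

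On $V(T_j)$, I rescale and restrict: set $Y'_i := Y_i \cap V(T_j)$, and define $\beta_i := \alpha_i/(1 - \tau)$ for indices $i$ with $r \notin Y_i$ and $\beta_i := 0$ otherwise. Because $\delta_v \leq \tau$ for every $v \in V(T_j)$, coverage follows from $\sum_{i : v \in Y'_i} \beta_i \geq (1 - \delta_v)/(1 - \tau) \geq 1$; the total weight is at most $(t - A_0)/(1 - \tau) \leq t'$; and defect cannot increase under restriction. Since $k \geq k^*$, the graph $\Closure{h - 1, k}$ contains $\Closure{h - 1, k^*}$ as an induced subgraph, and restricting the coloring once more yields a fractional $t'$-coloring of $\Closure{h - 1, k^*}$ with defect $d$, contradicting the choice of $k^*$.

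The delicate step is choosing the threshold $\tau$: it has to be small enough that rescaling by $1/(1 - \tau)$ pushes the total weight strictly below $h - 1$, yet bounded away from zero so that $|S|$ remains $O(1)$ in $k$ and thus some subtree can avoid $S$ altogether. The symmetric choice $\tau = \epsilon/h$ achieves both constraints simultaneously; once it is fixed, the remaining ingredients (the degree bound at $r$, Markov's inequality, subtree averaging, and the rescaled restriction) are routine.
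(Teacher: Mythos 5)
Your argument is correct, but its engine differs from the paper's. Both proofs peel off the root of $\Closure{h,k}$ and induct on $h$, using the fact that the root is adjacent to everything (so a defect-$d$ part through the root has at most $d+1$ vertices, hence meets few branches). The paper, however, proves a quantitative statement for every fixed $k$: if $\Closure{h,k}$ is fractionally $t$-colourable with defect $d$ then $t \geq h-(h-1)d/k$. It does this by a single weighted double count over \emph{all} $k$ branches at once -- each branch forces weight at least $h-1-(h-2)d/k$ by induction, the root forces weight at least $1$ counted with multiplicity $k-d$, and each $\alpha_j$ is counted at most $k$ times -- with no rescaling and no need to pass to a sub-branch or invoke the lemma in its ``for every $t<h-1$ there is a bad $k^*$'' form. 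You instead argue by contradiction: a Markov/averaging step ($\sum_{v\neq r}\delta_v\leq dA_0\leq dt$, so few vertices have $\delta_v>\tau$) locates \emph{one} branch untouched by heavy root-parts, and restricting and rescaling by $1/(1-\tau)$ there produces a fractional $(t')$-colouring of $\Closure{h-1,k}$ with $t'<h-1$, contradicting the inductive hypothesis at some $k^*$ (noting that $\Closure{h-1,k^*}$ is an induced subgraph of $\Closure{h-1,k}$). Both routes are sound; the paper's buys an explicit bound showing exactly how large $k$ must be in terms of $d$ and the slack $h-t$, and avoids the threshold bookkeeping, while yours is a more generic ``dilution'' argument that only needs the qualitative inductive statement and transfers easily to situations where one cannot control all branches simultaneously. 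The only points to state carefully in a final write-up are the ones you already flag implicitly: $t\geq 1$ (so $\tau=\eps/h<1$ and $t'$ is well defined), and the choice $k>\max\{dt/\tau,\,k^*-1\}$ made before fixing the colouring.
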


\begin{proof}
	We show by induction on $h$ that if $C \la h,k\ra$ is  fractionally $t$-colourable with defect $d$,  then $t\geq h - (h-1)d/k$.  This clearly implies the lemma. The base case $h=1$ is trivial. 
	
	For the induction step, suppose that $G:=C \la h,k\ra$ is  fractionally $t$-colourable with defect $d$. Thus there exist $Y_1,Y_2, \ldots, Y_s \subseteq V(G)$ and $\alpha_1,\ldots,\alpha_s \in [0,1]$ such that:
	\begin{itemize}
	\item every component of $G[Y_i]$ has maximum degree at most $d$, 
	\item $\sum_{i=1}^s \alpha_i \leq t$, and 
	\item $\sum_{i : v \in Y_i}\alpha_i \geq 1$ for every $v \in V(G)$.
	\end{itemize} 
	
	Let $r$ be the vertex of $G$ corresponding to the root of the complete $k$-ary tree and let $H_1,\ldots,H_k$ be the components of $G - r$. Then each $H_i$ is isomorphic to $C \la h-1,k\ra$. Let $J_0 := \{j : r \in Y_j \}$, and let $J_i := \{j : Y_j \cap V(H_i) \neq\emptyset \}$ for $i\in[1,k]$. Denote $\sum_{j \in J_i} \alpha_j$ by $\alpha(J_i)$ for brevity. Thus $\alpha(J_0) \geq 1$. For $i\in[1,k]$, the subgraph $H_i$ is $\alpha(J_i)$-colourable with defect $d$, and thus $\alpha(J_i) \geq h-1 - (h-2)d/k$  by the induction hypothesis. Thus 
	$$ (k-d)\alpha(J_0) + \sum_{i=1}^{k}\alpha(J_i) \geq (k-d)+k(h-1) - (h-2)d = kh-(h-1)d.$$
	If $j \in J_0$ then $Y_j$ intersects at most $d$ of $H_1,\dots,H_k$ (since $G[Y_j]$ has maximum degree at most $d$). Thus every $\alpha_j$ appears with coefficient at most $k$ in the left side of the above inequality, implying
	$$ (k-d)\alpha(J_0) + \sum_{i=1}^{k}\alpha(J_i) \leq k \sum_{i=1}^s \alpha_i \leq kt.$$   
	Combining the above inequalities yields the claimed bound on $t$. 
\end{proof}    

\begin{proof}[Proof of \cref{FractionalChromaticNumber}]
By \cref{l:lower}, $$\cfchi(\GG)  \geq  \dfchi(\GG) \geq \tcn(\GG) -1.$$
It remains to show that $\cfchi(\GG)  \leq \tcn(\GG) -1$. 
Equivalently, we need to show that for all $h,k \in \bb{N}$ and $\eps >0$, if $\Closure{h,k} \not\in \GG$ then  there exists $c$ such that every graph in $\GG$ is fractionally $(h-1+\eps)$-colourable with clustering $c$. This is trivial for $h=1$, and so we assume $h \geq 2$.
	
	Let $d \in \bb{N}$ satisfy the conclusion of \cref{t:DS} for the class $\GG$  and $\delta = 1 - \frac{1}{1+\eps/(h-1)}$. Choose $c=c(d,k+1,h)$ to satisfy the conclusion of \cref{UpperBound}. 
	We show that $c$ is as desired.
	
	Consider $G \in \GG$. By the choice of $d$ there exists  $s \in \bb{N}$ and  $X_1,X_2, \ldots, X_s \subseteq V(G)$ such that:
	\begin{itemize}
		\item $\td(G[X_i]) \leq d$, and
		\item every $v \in V(G)$ belongs to at least $(1 - \delta)s$ of these sets.
	\end{itemize} 
Since $\Closure{h,k} \not\in \GG$, we have $\WeakClosure{h,k+1} \not\in \GG$, and by the choice of $c$, for each $i\in[1,s]$ there exists a partition $(Y^1_i,Y^2_i,\ldots,Y^{h-1}_i)$ of $X_i$ such that every component of $G[Y^j_i]$ has at most $c$ vertices. Every vertex of $G$ belongs to at least $(1 - \delta)s$ sets $Y^{j}_i$ where $i\in[1,s]$ and $j\in[1,h-1]$. Considering these sets with equal coefficients $\alpha^{j}_i := \frac{1}{(1 - \delta)s}$, we conclude that $G$ is fractionally $\frac{h-1}{1-\delta}$-colourable with clustering $c$, as desired (since $\frac{h-1}{1-\delta}=h-1+\eps$).
\end{proof}  

\subsection*{Acknowledgement}

This work was partially completed while SN was visiting Monash University supported by a Robert Bartnik Visiting Fellowship. SN thanks the School of Mathematics at Monash University for its hospitality. Thanks to the referee for several helpful comments. 


{
\renewcommand{\baselinestretch}{1}
  \let\oldthebibliography=\thebibliography
  \let\endoldthebibliography=\endthebibliography
  \renewenvironment{thebibliography}[1]{%
    \begin{oldthebibliography}{#1}%
      \setlength{\parskip}{0ex}%
      \setlength{\itemsep}{0ex}}%
  {\end{oldthebibliography}}


\def\soft#1{\leavevmode\setbox0=\hbox{h}\dimen7=\ht0\advance \dimen7
	by-1ex\relax\if t#1\relax\rlap{\raise.6\dimen7
		\hbox{\kern.3ex\char'47}}#1\relax\else\if T#1\relax
	\rlap{\raise.5\dimen7\hbox{\kern1.3ex\char'47}}#1\relax \else\if
	d#1\relax\rlap{\raise.5\dimen7\hbox{\kern.9ex \char'47}}#1\relax\else\if
	D#1\relax\rlap{\raise.5\dimen7 \hbox{\kern1.4ex\char'47}}#1\relax\else\if
	l#1\relax \rlap{\raise.5\dimen7\hbox{\kern.4ex\char'47}}#1\relax \else\if
	L#1\relax\rlap{\raise.5\dimen7\hbox{\kern.7ex
			\char'47}}#1\relax\else\message{accent \string\soft \space #1 not
		defined!}#1\relax\fi\fi\fi\fi\fi\fi}

\end{document}